\newcommand{\Erase}[1]{\textcolor{blue}{\sout{\textcolor{black}{#1}}}}
\begin{document}

\title[Coupled KPZ from interacting diffusions driven by a single-site potential]{Derivation of coupled KPZ equations from interacting diffusions driven by a single-site potential}
\author[K. Hayashi]{Kohei Hayashi}
\address{RIKEN Interdisciplinary Theoretical and Mathematical Science, 2-1 Hirosawa, Wako, Saitama 351-0198 Japan.} 
\email{kohei.hayashi.vh@riken.jp}
%\address{Graduate School of Mathematical Sciences, The University of Tokyo, Komaba, Tokyo 153-8914, Japan.}
%\email{kohei@ms.u-tokyo.ac.jp}
\keywords{KPZ equation, stochastic Burgers equation, interacting diffusion, O'Connell-Yor polymer}
\subjclass[2000]{60H15, 60K35, 82B44}
\maketitle

\theoremstyle{plain}
\newtheorem{theorem}{Theorem}[section] 
\newtheorem{lemma}[theorem]{Lemma}
\newtheorem{corollary}[theorem]{Corollary}
\newtheorem{proposition}[theorem]{Proposition}

\theoremstyle{definition}
\newtheorem{definition}[theorem]{Definition}
\newtheorem{remark}[theorem]{Remark}
\newtheorem{assumption}[theorem]{Assumption}
\newtheorem{example}[theorem]{Example}

\makeatletter
\renewcommand{\theequation}{%
\thesection.\arabic{equation}}
\@addtoreset{equation}{section}
\makeatother

\makeatletter
\renewcommand{\p@enumi}{A}
\makeatother

%%%%%%%%%%%%%%%%%%%%%%%%%%%%%%%%%%%%%%%%%%
%   Define Roman numbers here (I, II, III, IV, etc)    %
%%%%%%%%%%%%%%%%%%%%%%%%%%%%%%%%%%%%%%%%%%
\newcounter{num}
\newcommand{\Rnum}[1]{\setcounter{num}{#1}\Roman{num}}

%%%%%%%%%%%%%%
%  Abstract  %
%%%%%%%%%%%%%%
\begin{abstract} 
The Kardar-Parisi-Zhang (KPZ) equation is a stochastic partial differential equation which is derived from various microscopic models, and to establish a robust way to derive the KPZ equation is a fundamental problem both in mathematics and in physics. 
As a microscopic model, we consider multi-species interacting diffusion processes, whose dynamics is driven by a nonlinear potential which satisfies some regularity conditions. 
In particular, we study asymptotic behavior of fluctuation fields associated with the processes in the high temperature regime under equilibrium. 
As a main result, we show that when the characteristic speed of each species is the same, the family of the fluctuation fields seen in moving frame with this speed converges to the coupled KPZ equations.
Our approach is based on a Taylor expansion argument which extracts the harmonic potential as a main part.
This argument works without assuming a specific form of the potential and thereby the coupled KPZ equations are derived in a robust way. 
%When the characteristic speeds of several species are different, we introduce properly transformed fluctuation fields for which a nontrivial limit should be obtained, and we prove the tightness of them. 
%The choice of these new fluctuation fields is explained by the mode coupling theory. 
\end{abstract}

%%%%%%%%%%%%%%%%%%
%  Introduction  %
%%%%%%%%%%%%%%%%%%
\section{Introduction}
Kardar-Pasiri-Zhang~(KPZ) equation is a stochastic partial differential equation~(SPDE) of known scalar-valued function $h =h (t,x)$, where $(t,x)\in [0,\infty) \times \mathbb{R}$, of the following form. 
\begin{equation}
\label{eq:KPZ_intro}
\partial_t h = \nu \partial_x^2 h + \lambda (\partial_x h)^2 + \sqrt{D} \dot{W}(t,x).
\end{equation}
Here $\nu, D >0$ and $\lambda \in \mathbb{R}$ are constants and $ \dot{W}(t,x)$ is the space-time white noise. 
Throughout this paper we consider the one-dimensional setting for the space.  
The KPZ equation \eqref{eq:KPZ_intro} is introduced in \cite{kardar1986dynamic} and it has been gathering interests from both mathematical and physical point of view. 
For a perspective of SPDE theory, KPZ equation~\eqref{eq:KPZ_intro} is thought-provoking since it exhibits a singular behavior. 
%Indeed, since the space-time white noise has H\"{o}lder regularity less than half in the spatial direction, it is typically expected that the solution of \eqref{eq:KPZ_intro}, if exists, has the same regularity. 
More specifically, it turns out that the solution $h$ is not differentiable in $x$ and $\partial_x h$ should be understood in the sense of distribution.
As a result, the singular term $(\partial_xh)^2$ should be defined properly with the help of some renormalization procedure, which formally means subtraction of a divergent factor, and this has been established in several ways~\cite{hairer2013solving, hairer2014theory, gubinelli2015paracontrolled, gubinelli2017kpz, gonccalves2014nonlinear, gubinelli2018energy}. 
%Both these approaches are formulated as a generalization of the rough path theory and particularly, the solutions are understood pathwisely. As another approach, \cite{gonccalves2014nonlinear} introduced the notion of stationary energy solution of the KPZ equation and defined the solution as a martingale problem. Existence and uniqueness of the stationary energy solution are proved in \cite{gonccalves2014nonlinear} and \cite{gubinelli2018energy}, respectively. 
A significant feature of the KPZ equation is its universality and the equation is expected to be derived from various interface growth models.  
(See \cite{corwin2012kardar} for a pedagogical review on this perspective.) 
After the celebrated result by \cite{bertini1997stochastic}, which studies density fluctuation fields of weakly asymmetric simple exclusion processes~(WASEP), the derivation of the KPZ equation has been established for several types of microscopic systems~\cite{gonccalves2014nonlinear, gonccalves2015stochastic, diehl2017kardar, jara2019scaling, alberts2014intermediate, corwin2018operatorname, jara2020stationary, ahmed2022microscopic, hayashi2023derivation, gonccalves2023derivation}.  
%\cite{gonccalves2015stochastic}~studied interacting particle systems including zero-range processes with weak symmetry. In addition, derivation results for $q$-deformation of interacting particle systems are known: \cite{corwin2018operatorname} considered $ASEP(q,j)$, and recently, \cite{hayashi2023derivation} studied equilibrium fluctuation of a system which is related to the so-called $q$-TASEP (see~\cite{borodin2014macdonald} for its definition), based on the notion of the stationary energy solution. For results other than interacting particle systems, \cite{diehl2017kardar} studied the Ginzburg-Landau model~\cite{spohn1986equilibrium}, a system of diffusion processes, with weak asymmetry. On the other hand, \cite{jara2020stationary} considered equilibrium fluctuation of the so-called Sasamoto-Spohn model~\cite{sasamoto2009superdiffusivity}. Besides, a physical model which is expected to converge to KPZ/Burgers equations in a universal way is directed polymers. This is originally introduced in~\cite{huse1985pinning} and for which a mathematically rigorous analysis is established in~\cite{imbrie1988diffusion}. As to directed polymer models, \cite{alberts2014intermediate} derived the KPZ equation {\color{red}from} the intermediate disorder regime, whereas \cite{jara2020stationary} established the derivation {\color{red}for} the stationary O'Connell-Yor polymer, which is introduced in \cite{o2001brownian}. Also, we note that recently \cite{ahmed2022microscopic} derived the KPZ equation from interacting oscillators with two conserved quantities.

As a generalization of the equation \eqref{eq:KPZ_intro} we can consider the following system of vector-valued equations of $h = (h^1, \ldots, h^d) \in \mathbb{R}^d$.  
\begin{equation}
\label{eq:coupledKPZ_intro}
\partial_t h^i = \frac{1}{2} \partial_x h^i 
+ \sum_{k, \ell=1}^d \Gamma_{k \ell}^i (\partial_x h^k)(\partial_x h^\ell) 
+ \sqrt{D^i} \dot{W}^i(t, x) .
\end{equation}
Here $(\Gamma^i_{k \ell})_{i, k ,\ell = 1,\ldots, d}$ is a constant coupling tensor and $\{ \dot{W}^i(t,x ) \}_{i =1, \ldots, d}$ is a family of independent space-time white noise. 
The coupled KPZ equation\Erase{s}~\eqref{eq:coupledKPZ_intro}\Erase{,} was introduced in \cite{ertacs1992dynamic} as a random interface model, and then \cite{funaki2017coupled} proved well-posedness of~\eqref{eq:coupledKPZ_intro} on the one-dimensional torus. 
Moreover, in \cite{funaki2017coupled} it is shown that under the so-called \textit{trilinear condition}
\begin{equation}
\label{eq:trilinear_intro}
\Gamma^i_{k \ell} 
= \Gamma^i_{\ell k} 
= \Gamma^k_{i \ell},
\end{equation}
the product of independent space white noise is an invariant measure for the dynamics. 
Under the trilinear condition~\eqref{eq:trilinear_intro}, \cite{gubinelli2020infinitesimal} developed the uniqueness of the stationary energy solution of coupled KPZ equation on the torus, though the uniqueness on the whole line has not proved yet.  
The coupled KPZ equation\Erase{s}~\eqref{eq:coupledKPZ_intro}, similarly to the scalar-valued case~\eqref{eq:KPZ_intro}, is considered to have a universality and it is significant to derive the system~\eqref{eq:coupledKPZ_intro} from microscopic models in mathematically rigorous ways. 
The rigorous derivation of the coupled KPZ equations, however, is not established enough, except for a few models~\cite{bernardin2021derivation, butelmann2022scaling}. 
In these results~\cite{bernardin2021derivation, butelmann2022scaling}, the coupled KPZ equations have been derived under the weak asymmetric regime.
The anti-symmetric part of the infinitesimal generator of these models is accelerated by a small factor, compared to the symmetric part, from which the nonlinear term in the KPZ equation is derived. 
On the other hand, the symmetric part is close to a discrete Laplacian, which macroscopically gives rise to the normal heat diffusion. 
This structure, namely that the generator of the dynamics is decomposed into the sum of the heat diffusion part and the weak asymmetric part, is crucial to obtain the coupled KPZ equations in the limit.   
%{\color{red}On the other hand}, the coupled KPZ equations \eqref{eq:coupledKPZ_intro} are derived from interacting particle systems \cite{bernardin2021derivation}, and vector-valued version of the Sasamoto-Spohn model \cite{butelmann2022scaling}, {\color{red}similarly to the single component case.} 

%%%%%%%%%%%%%%%%%%%%%%%%%%%%%%%%%%%%%%%%%%%%%%%
%  Review on the polymer case. 
\if0 
In this paper, we focus on the derivation of the coupled KPZ equations from interacting diffusion processes, whose time evolution is described by a system of stochastic differential equations, as an extension of the O'Connell-Yor polymer case.
First, recall that $Z_{\beta,\theta}(t,j)$ be the partition function of the stationary O'Connell-Yor polymer defined by
\begin{equation*}
\begin{aligned}
&Z_{\beta,\theta}(t,j)\\
&\quad = \int_{-\infty< s_0 < s_1< \cdots < s_{j-1}<t}
\exp \big[ \beta B^{(0)}(s_0) + \theta s_0 
+\beta \{ B^{(1)}(s_0,s_1) + \cdots + B^{(n)} (s_{n-1}, t) \} \big]
ds_{0,j-1} , 
\end{aligned}
\end{equation*}
for $t \ge 0$ and $j \ge 1$. Here, $\{ B^{(j)}(t): j \ge 0 \}$ is a family of independent Brownian motion and $B^{(j)}(s,t) = B^{(j)}(t) - B^{(j)}(s) $. 
In addition, we used a short-hand notation $ds_{0,j-1} = ds_0 \cdots ds_{j-1}$. 
Define 
\begin{equation*}
u_j (t) 
= u_{\beta, \theta}(t,j) 
= \frac{1}{\beta} \log \big( Z_{\beta, \theta}(t,j) / Z_{\beta, \theta}(t,j-1) \big) . 
\end{equation*} 
Then it is shown that the free-energy density $u_j(t)$ is stationary under the dynamics, which satisfies the following system of stochastic differential equations. 
\begin{equation*}
\begin{cases}
\begin{aligned}
& du_j = ( V^\prime_\beta(u_{j-1}) - V^\prime_\beta (u_j) ) dt 
+ (dB^{(j)}_t - dB^{(j-1)}_t) , \quad j \ge 2, \\
& du_1 =  ( - \beta^{-1} \theta - V^\prime_\beta (u_1) ) dt + ( dB^{(1)}_t - dB^{(0)}_t) . 
\end{aligned}
\end{cases}
\end{equation*}
Here, $V(u) = e^{-u} - 1 + u$ is the so-called Toda lattice potential, and 
Then, \cite{jara2020stationary} proved that taking $\beta = n^{-1/2}$, where an infinite temperature limit is considered, the fluctuation field defined for the free-energy density $u_j(t)$ converges to the stochastic Burgers equation, which is satisfied by the tilt of the solution of the single-component KPZ equation. 
Here, note that by the Taylor expansion we have that 
\begin{equation}
\label{eq:Taylor_intro}
V_{n^{-1/2}} (u) 
= \frac{V^{\prime\prime}(0)}{2} u^2 
+ \frac{V^{(3)}(0)}{6} \frac{u^3}{\sqrt{n}}  
+ O (n^{-1}) .
\end{equation}
In particular, under the aforementioned scaling, the leading term of the Toda lattice potential becomes a quadratic function, which macroscopically corresponds to the usual heat diffusion, whereas it turns out that the order-three term brings us the non-linear term of the stochastic Burgers equation in the limit. 
As mentioned in \cite{jara2020stationary}, it is a natural question to extend the result to more general potential, not restricted to the Toda lattice case. 
This is expected to hold in a universal way, since the proof is primarily based on the Taylor expansion \eqref{eq:Taylor_intro}, which is valid regardless of the form of driving potential.
(See also \cite{landon2020kpz} which derives a KPZ-type fluctuation exponent.)
As a further extension, we consider a multivariate case and investigate the following system of stochastic differential equations with a generic potential $V:\mathbb{R}^d \to \mathbb{R}$. 
\begin{equation*}
du^{i}_j = (\partial_i V_\beta (u_{j-1}) - \partial_i V_\beta(u_j)) dt 
+ (dB_j^{i}(t) - dB_{j-1}^{i}(t) ) ,
\end{equation*}
for $(i,j) \in \{ 1,\ldots, d\} \times \{1, \ldots, n\}$.
Here, $\{ B^i_j (t) \}_{1\le i\le d, 1\le j \le n} $ denotes a family of $d$-dimensional independent standard Brownian motions. 
\fi
%  End of the review. %%%%%%%%%%%%%%%%%%%%%%%%%%%%%%%%%%%%%%%%%%%%%%%%%%

In this paper, we study a microscopic system under a different regime from these previous models.  
More specifically, we consider $d$-species interacting diffusion processes driven by a nonlinear potential $V:\mathbb{R}^d\to 
\mathbb{R}$, which is in a strong asymmetric regime in the sense that the anti-symmetric part of the generator has the same order as the symmetric part. 
In particular, unlike the previous models~\cite{bernardin2021derivation, butelmann2022scaling} in a weak asymmetric regime, the structure where both the diffusion term and the nonlinear transport term remain in the limiting equation seems lacking for this model. 
However, we can derive the coupled KPZ equations from our model, which is in a strong asymmetric regime, by introducing a proper scaling for the potential and then extracting the heat diffusion as a main part in the high temperature limit.
A novelty of this approach is that it is applicable for possibly a wide class of nonlinear potentials.
Derivation of the scalar-valued KPZ equation in the high temperature was conducted in~\cite{jara2020stationary} for the O'Connell-Yor polymer model, which can be viewed as a scalar-valued interacting diffusion of our model driven by the so-called Toda lattice potential. 
For scalar-valued case, our results generalized this to more general nonlinear potentials.
(See also \cite{corwin2018operatorname, hayashi2023derivation} for models of interacting particles for which the scalar-valued KPZ equation is derived in the strong asymmetric regime.)
Furthermore, we extended the result to multi-species cases and obtained the coupled KPZ equations in the limit.
As far as we know, this is the first result which derives the coupled KPZ equations in a strong asymmetric regime. 
%\Erase{In the previous models, for instance, WASEP~\cite{bertini1997stochastic}, \cite{gonccalves2014nonlinear}, it has been observed that the dynamics which is macroscopically described by a stochastic heat equation is required as its main part, in order to obtain the KPZ equations in the limit.} {\color{red} [The meaning of this line is not clear. It should be rewritten.]}
%\Erase{Such a heat diffusion part can be extracted, in a robust way, from wide class of nonlinear potentials when the temperature of the system tends to infinity in the following manner.}  

To clarify our idea, let $\beta>0$ be the inverse temperature and we consider the limit $\beta\to0$. 
Moreover, let $V_\beta (x) = \beta^{-2} V(\beta x)$ be a rescaled potential. 
When the driving potential $V$ satisfies $V(0)=\partial_i V (0)=0$ for any $i$, $\partial_{ij}^2V(0)=\delta_{i,j}$, and some regularity condition, we have the following Taylor expansion for the rescaled potential $V_\beta$.
\begin{equation*}
V_\beta(u)
= \frac{1}{2} \|u\|^2
+ \frac{\beta}{3!} \sum_{i_1,i_2,i_3=1,\ldots,d} \partial^3_{i_1i_2i_3}V(0) u^{i_1} u^{i_2} u^{i_3} 
+ O(\beta^2),
\end{equation*}
as $\beta\to 0$. 
The first term in the right-hand side of the last display is the harmonic potential, which typically brings the normal diffusion in the limit.
The order of the second term is controlled by tuning the inverse temperature properly depending on the scaling parameter and this will give rise to the quadratic terms in the limiting equation. 

For our multivariate model we can define fluctuation fields naively from the interacting diffusion processes. 
(See \eqref{eq:diffusion_fluctuation_ori} for the definition.) This family of fluctuation fields, however, diverges due to some interaction between different species. 
More specifically, such difficulty arises when macroscopic velocity of each diffusion process varies. Accordingly, we need an assumption which enables us to take a common speed of the moving frame in order to obtain proper scaling limits, similarly to the previous results for multi-species models~\cite{bernardin2021derivation, butelmann2022scaling}, in particular the corresponding condition is called ‘Frame condition (FC)’ in \cite{bernardin2021derivation}.
Under this assumption we show that in Theorem \ref{thm:diffusion_sbe_derivation_main} the family of the fluctuation fields converges to the coupled KPZ equations. 
%Even when the above condition on the macroscopic speed does not hold, we can find a proper choice of fluctuation fields as a linear combination of the original ones. 
%(See \eqref{eq:fluctuation} for the definition.)
%In Theorem \ref{thm:diffusion_sbe_derivation_main}, we also show that the sequence of these transformed fields is tight, albeit the characterization of the limiting SPDE is not established. 
%The above procedure to find the modified fluctuation fields is to observe the time evolution of the system in a ``characteristic direction'' of our microscopic system. 
%Moreover, the choice of the linear combination of fields is explained in terms of the mode coupling theory~\cite{spohn2014nonlinear}.
%In addition to this physical novelty, 
Our approach is significant from the view point of its robustness. 
Recall that through the above Taylor expansion argument we could extract the heat diffusion as a main part and it did not require any explicit form of the nonlinear potential except for the conditions on lower order derivatives. 
This supports the universality of the KPZ equation and the coupled KPZ equation, and we conjecture that the equations are derived from wide class of other microscopic models with this robust argument, especially in the high temperature regime.

\subsection*{Organization of the Paper}
First, Section \ref{sec:model} gives a precise description of our model and statement of the main result. 
We recall the notion of stationary energy solution of coupled KPZ, or coupled Burgers equations.
Moreover, we give an outline of the proof of the main theorem based on a martingale decomposition which is obtained from Dynkin's theorem. 
%As we explained above, the Taylor expansion of the driving potential is crucial to find a linear transformation of the fluctuation fields with possibly non-trivial limit points, and also for identification of the limit. 
%This argument will be justified later in Section \ref{sec:diffusion_taylor_expansion}. 
Then, in Section \ref{sec:BG}, we will show the so-called second-order Boltzmann-Gibbs principle as a dynamical estimate, which plays an essential role to derive singular terms in limiting equations. 
After that, we show in Section \ref{sec:diffusion_tightness} that the sequences in the martingale decomposition are tight and then, in Section \ref{sec:identification}, we identify limit points of the sequences.

\subsection*{Notations}
In the sequel, we omit summation symbols if there exists the same letter in both upper and lower indices (Einstein's convention): $a^i_j x^j = \sum_{j=1}^d a^i_j x^j$, for instance.
Moreover, when we give some estimate, we write $C$ as a universal constant, allowing it to change from line to line.

%%%%%%%%%%%%%%%%%%%%%%%%%%
%   Model and Result      %
%%%%%%%%%%%%%%%%%%%%%%%%%%
\section{Model and Result}
\label{sec:model}
\subsection{Coupled KPZ Equation}
First, we review the notion of stationary energy solution $h(t,x)=(h^i(t,x))_{1\le i \le d}$ of the following $(1+1)$-dimensional coupled KPZ equation~\cite{ertacs1992dynamic} on the unit torus $\mathbb{T} = \mathbb{R}/\mathbb{Z} \cong [0,1)$. 
\begin{equation}
\label{eq:coupledKPZ}
\partial_t h^i
= \frac{1}{2} \partial_x^2 h^i
+ \frac{1}{2} \Gamma^i_{k \ell} \partial_x h^k \partial_x h^\ell + \dot{W}^i 
, \quad (t,x) \in [0,\infty) \times \mathbb{T}. 
\end{equation}
Here, $\dot{W}(t,x)=(\dot{W}^i(t,x))_{i=1,\ldots,d}$ is the $\mathbb{R}^d$-valued space-time white-noise and we omit the summation symbol over $k$ and $\ell$ by Einstein's convention. 
As a similar object, one can find that the tilt $u^i = \partial_x h^i$ formally satisfies the following coupled stochastic Burgers equations (SBE).
\begin{equation}
\label{eq:coupledSBE}
\partial_t u^i
= \frac{1}{2} \partial_x^2 u^i 
+ \frac{1}{2} \Gamma^i_{k \ell} \partial_x (u^k u^\ell) + \partial_x \dot{W}^i 
, \quad (t,x) \in [0,\infty) \times \mathbb{T}. 
\end{equation}
In the sequel, we assume that the coupling tensor $\Gamma^i_{k \ell}$ is symmetric in all indices: 
\begin{equation}
\label{eq:trilinear}
\Gamma^i_{k \ell}
= \Gamma^i_{\ell k}
= \Gamma^k_{i \ell} .
\end{equation}
The condition \eqref{eq:trilinear} is called the \textit{trilinear condition}.

\begin{definition}
We say that a process $u = \{ u_t = (u^i_t)_{1\le i \le d} :  t \in [0,T]\}$ taking values in $C([0,T], \mathcal{S}^\prime(\mathbb{T})^d)$ satisfies condition \textbf{(S)} if for each $t $ the $\mathcal{S}^\prime(\mathbb{T})$-valued random variables $\{ u^i_t \}_{1 \le i \le d}$ form a family of independent white-noise with variance $1$.
\end{definition}

For a process $u = \{ u_t : t \in [0,T] \}$ satisfying condition \textbf{(S)}, $\varphi \in \mathcal{S}(\mathbb{T})$ and $\varepsilon > 0$, we define 
\begin{equation*}
\mathcal{A}_{s,t}^{\varepsilon,(i_1,i_2)}(\varphi) 
= \int_s^t \int_{\mathbb{T}} u_r^{i_1}(\iota_\varepsilon(x;\cdot)) 
u^{i_2}_r (\iota_\varepsilon(x;\cdot))
\partial_x \varphi (x) dx dr ,
\end{equation*}
where we set $\iota_\varepsilon (x;\cdot) = \varepsilon^{-1} \mathbf{1}_{[x,x+\varepsilon )}(\cdot)$. 
Here, note that the function $\iota_\varepsilon(x;\cdot)$ does not belong to $\mathcal{S}(\mathbb{R})$, though, the above quantity in the last display is well-defined since $u^i_t(\cdot)$ is the space white-noise for each $t\ge 0$ by condition \textbf{(S)}. 
%\kh{write about the definition}

\begin{definition}
Let $u = \{ u_t : t \in [0,T]\}$ be a process satisfying condition \textbf{(S)}. We say that the process $u$ satisfies the energy estimate \textbf{(EC)} if there exists a constant $\kappa > 0$ such that for any $\varphi \in \mathcal{S} (\mathbb{T})$, any $0 \le s \le t \le T$, any $0 < \delta < \varepsilon < 1$ and any $i_1,i_2 \in \{ 1, \ldots, d \}$, 
\begin{equation*}
\mathbb{E}\big[ \big| 
\mathcal{A}_{s,t}^{\varepsilon,(i_1,i_2)}(\varphi) 
- \mathcal{A}_{s,t}^{\delta,(i_1,i_2)}(\varphi)
\big| \big]
\le \kappa (t-s) \varepsilon \| \partial_x \varphi \|^2_{L^2(\mathbb{T}) }.
\end{equation*}
Here $\mathbb{E}$ denotes the expectation with respect to the measure $\mathbb{P}$ of a probability space where the process $u$ lives. 
\end{definition}

The conditions \textbf{(S)} and \textbf{(EC)} are crucial to define the quadratic term in \eqref{eq:coupledSBE} as follows. 

\begin{proposition}
\label{prop:quadratic}
Assume the conditions \textbf{(S)} and \textbf{(EC)}. 
Then there exists an $\mathcal{S}^\prime(\mathbb{T})$-valued process $\{ \mathcal{A}^{(i_1,i_2)}_t : t\in [0,T] \}$ with continuous trajectories such that
\begin{equation*}
\mathcal{A}^{(i_1,i_2)}_{s,t} (\varphi) = \lim_{\varepsilon \to 0}
\mathcal{A}^{\varepsilon, (i_1,i_2)}_{s,t} (\varphi) , 
\end{equation*}
in $L^2(\mathbb{P})$ for any $s,t \in [0,T]$, any $\varphi \in \mathcal{S}(\mathbb{T})$ and any $i_1, i_2 \in \{ 1,\ldots, d\}$.  
\end{proposition}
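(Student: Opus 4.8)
The plan is to construct $\mathcal{A}^{(i_1,i_2)}_{s,t}(\varphi)$ as an $L^2(\mathbb{P})$-limit by verifying the Cauchy criterion, and then to produce a version with continuous trajectories by a Kolmogorov-type argument. By completeness of $L^2(\mathbb{P})$ it suffices to show that $\{ \mathcal{A}^{\varepsilon,(i_1,i_2)}_{s,t}(\varphi) \}_{\varepsilon > 0}$ is Cauchy as $\varepsilon \to 0$. Since $\varphi \mapsto \mathcal{A}^{\varepsilon,(i_1,i_2)}_{s,t}(\varphi)$ is linear and all the bounds below are controlled by $\| \partial_x \varphi \|_{L^2(\mathbb{T})}$, the limit will automatically be linear and continuous in $\varphi$, hence defines an $\mathcal{S}^\prime(\mathbb{T})$-valued random variable for each pair $(s,t)$. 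Throughout I fix $i_1, i_2$ and suppress them from the notation.

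The engine is the energy estimate \textbf{(EC)}. Restricting to dyadic scales $\varepsilon_n = 2^{-n}$ and telescoping, \textbf{(EC)} gives $\mathbb{E}[ | \mathcal{A}^{\varepsilon_{n+1}}_{s,t}(\varphi) - \mathcal{A}^{\varepsilon_n}_{s,t}(\varphi) | ] \le \kappa (t-s) 2^{-n} \| \partial_x \varphi \|^2_{L^2(\mathbb{T})}$, whose right-hand side is summable in $n$; sandwiching an arbitrary $\varepsilon \in [\varepsilon_{n+1}, \varepsilon_n]$ between consecutive dyadics then upgrades this into a genuine Cauchy property as $\varepsilon \to 0$, and the resulting limit does not depend on the dyadic sequence by uniqueness of limits. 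The delicate point is that \textbf{(EC)} is phrased in $L^1(\mathbb{P})$ while the assertion is $L^2(\mathbb{P})$ convergence. To close this gap I would exploit condition \textbf{(S)}: at each fixed time $r$ the field $u_r$ is Gaussian, so the integrand $u^{i_1}_r(\iota_\varepsilon(x;\cdot)) u^{i_2}_r(\iota_\varepsilon(x;\cdot))$ — after subtracting its constant mean, which the integration against $\partial_x \varphi$ annihilates because $\int_{\mathbb{T}} \partial_x \varphi \, dx = 0$ on the torus — lives in the second homogeneous Wiener chaos of $u_r$, on which all $L^p$-norms are equivalent by Nelson's hypercontractivity. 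I expect reconciling the \emph{time-averaged} estimate \textbf{(EC)} with this \emph{fixed-time} chaos bound to be the main obstacle, since the time integral $\int_s^t (\cdots) \, dr$ need not remain in a single chaos and the naive Minkowski bound through the static second moment is too lossy (it diverges as $\varepsilon \to 0$). The correct route is to apply the moment equivalence to the full increment $\mathcal{A}^{\varepsilon}_{s,t}(\varphi) - \mathcal{A}^{\delta}_{s,t}(\varphi)$ and to retain the dynamical gain $\varepsilon$ supplied by \textbf{(EC)}, which is exactly the averaging effect that makes the limit exist even though the pointwise-in-time integrand does not converge.

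Once the limit $\mathcal{A}_{s,t}(\varphi)$ exists in $L^2(\mathbb{P})$, I would pass the additivity $\mathcal{A}^\varepsilon_{s,u}(\varphi) = \mathcal{A}^\varepsilon_{s,t}(\varphi) + \mathcal{A}^\varepsilon_{t,u}(\varphi)$ to the limit, set $\mathcal{A}_t(\varphi) := \mathcal{A}_{0,t}(\varphi)$, and establish continuity of $t \mapsto \mathcal{A}_t(\varphi)$. For each fixed $\varepsilon$ the map $t \mapsto \mathcal{A}^\varepsilon_{0,t}(\varphi) = \int_0^t (\cdots) \, dr$ is continuous, so it is enough to prove that $\mathcal{A}^\varepsilon_{0,\cdot}(\varphi) \to \mathcal{A}_{0,\cdot}(\varphi)$ uniformly on $[0,T]$ in $L^2(\mathbb{P})$. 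Granting the same chaos reduction, the factor $(t-s)$ in \textbf{(EC)} combined with moment equivalence yields an increment bound of the form $\mathbb{E}[ | \mathcal{A}^\varepsilon_{s,t}(\varphi) - \mathcal{A}^\delta_{s,t}(\varphi) |^{2p} ] \le C_p ((t-s)\varepsilon)^{2p} \| \partial_x \varphi \|^{4p}_{L^2(\mathbb{T})}$, so that a Kolmogorov--Chentsov argument applies with Hölder exponent $2p > 1$ and provides continuous modifications converging uniformly on $[0,T]$. The limiting process therefore has continuous trajectories, which completes the construction.
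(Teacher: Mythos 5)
Your argument has a genuine gap at its central step, and you in fact name the obstacle yourself before assuming it away. Condition \textbf{(S)} prescribes only the fixed-time marginals of $u$: each $u_r$ is a white noise, but nothing is assumed about the joint law of $(u_r)_{r\in[s,t]}$. Consequently the increment $\mathcal{A}^{\varepsilon}_{s,t}(\varphi)-\mathcal{A}^{\delta}_{s,t}(\varphi)$, being a time integral over $r$, does not live in the second Wiener chaos of any single Gaussian field, and Nelson's hypercontractivity cannot be ``applied to the moment equivalence of the full increment'' as you propose --- there is no chaos structure for the time-averaged object to which moment equivalence could attach. Everything downstream of this step is therefore unsupported: the upgrade of the $L^1(\mathbb{P})$ reading of \textbf{(EC)} to $L^2(\mathbb{P})$, the claimed bound $\mathbb{E}\big[|\mathcal{A}^{\varepsilon}_{s,t}(\varphi)-\mathcal{A}^{\delta}_{s,t}(\varphi)|^{2p}\big]\le C_p\big((t-s)\varepsilon\big)^{2p}\|\partial_x\varphi\|^{4p}_{L^2(\mathbb{T})}$, and the Kolmogorov--Chentsov/uniform-convergence conclusion drawn from it. (Your fixed-time observations --- Gaussianity of $u_r$, and that the mean $\delta_{i_1,i_2}\varepsilon^{-1}$ is constant in $x$ and annihilated by $\int_{\mathbb{T}}\partial_x\varphi\,dx=0$ --- are correct, but they only control single-time moments.)

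The resolution, which is the route of the proof the paper actually invokes (\cite[Theorem 1]{gonccalves2014nonlinear}, cited verbatim), is that \textbf{(EC)} is meant and used as a \emph{second-moment} bound: this is how the condition appears in the cited source, and it is exactly what the paper verifies in Section \ref{sec:identification}, where \eqref{eq:BA_difference} is an $L^2(\mathbb{P})$ estimate and \textbf{(EC)} is deduced by the triangle inequality in the $L^2$ norm. With $\mathbb{E}\big[|\mathcal{A}^{\varepsilon}_{s,t}(\varphi)-\mathcal{A}^{\delta}_{s,t}(\varphi)|^2\big]\le \kappa(t-s)\varepsilon\|\partial_x\varphi\|^2_{L^2(\mathbb{T})}$ for $\delta<\varepsilon$, the Cauchy property in $L^2(\mathbb{P})$ is immediate, because the bound depends only on the coarser scale; your dyadic telescoping and the entire chaos apparatus are unnecessary. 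Path continuity then follows without higher moments: letting $\delta\to0$ gives $\mathbb{E}\big[|\mathcal{A}_{s,t}(\varphi)-\mathcal{A}^{\varepsilon}_{s,t}(\varphi)|^2\big]\le \kappa(t-s)\varepsilon\|\partial_x\varphi\|^2_{L^2(\mathbb{T})}$, while \textbf{(S)} together with Minkowski's inequality in time and the covariance computation for $u_r(\iota_\varepsilon(x;\cdot))$ yields the static bound $\mathbb{E}\big[|\mathcal{A}^{\varepsilon}_{s,t}(\varphi)|^2\big]\le C(t-s)^2\varepsilon^{-1}\|\partial_x\varphi\|^2_{L^2(\mathbb{T})}$; optimizing with $\varepsilon\sim(t-s)^{1/2}$ gives $\mathbb{E}\big[|\mathcal{A}_{s,t}(\varphi)|^2\big]\le C(t-s)^{3/2}\|\partial_x\varphi\|^2_{L^2(\mathbb{T})}$, and Kolmogorov--Chentsov (applied to the real-valued processes $t\mapsto\mathcal{A}_t(\varphi)$ over a countable family of $\varphi$, then extended to $\mathcal{S}^\prime(\mathbb{T})$) provides the continuous modification. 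This interpolation between the dynamical estimate \textbf{(EC)} and the static bound from \textbf{(S)} is the idea your proposal is missing.
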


The proof of Proposition \ref{prop:quadratic} is identical to the scalar-valued case \cite[Theorem 1]{gonccalves2014nonlinear}. 
Using the limiting process which is constructed in Proposition \ref{prop:quadratic}, stationary energy solution of \eqref{eq:coupledSBE} is defined as follows.

\begin{definition}
\label{def:energy_solution}
We say that a process $u = \{ u_t :t \in [0, T]\}$ is a stationary energy solution of the coupled stochastic Burgers equation \eqref{eq:coupledSBE} if 
\begin{enumerate}
\item 
The process $u$ satisfies the conditions \textbf{(S)} and \textbf{(EC)}. 

\item
For all $\varphi \in \mathcal{S}(\mathbb{R})$ and $i \in \{ 1, \ldots, d \}$, the process 
\begin{equation*}
u^i_t(\varphi) - u^i_0(\varphi) - \frac{1}{2} \int_0^t u^i_s(\partial_x^2 \varphi)ds 
+ \frac{1}{2} \Gamma^i_{k \ell} \mathcal{A}_t^{(k,\ell)}(\varphi),
\end{equation*}
is a martingale with respect to the filtration generated by $u$ with quadratic variation $t \| \partial_x \varphi \|^2_{L^2(\mathbb{T})}$.  

\item
Let $\hat{u} = \{ u_{T-t}: t \in [0,T] \}$ be the time-reversed process of $u$. Then, for all $\varphi \in \mathcal{S}(\mathbb{R})$ and $i \in \{ 1, \ldots, d \}$, the process 
\begin{equation*}
\hat{u}^i_t(\varphi) - \hat{u}^i_0(\varphi) - \frac{1}{2} \int_0^t \hat{u}^i_s(\partial_x^2 \varphi)ds 
+ \frac{1}{2} \Gamma^i_{k \ell} \hat{\mathcal{A}}_t^{(k,\ell)}(\varphi),
\end{equation*}
is a martingale with respect to the filtration generated by $\hat{u}$ with quadratic variation $t \| \partial_x \varphi \|^2_{L^2(\mathbb{T})} $, where we set $\hat{\mathcal{A}}_t= \mathcal{A}_T - \mathcal{A}_{T-t}$.  
\end{enumerate}
\end{definition}
Under the trilinear condition \eqref{eq:trilinear}, the uniqueness of energy solution of the coupled Burgers equation \eqref{eq:coupledSBE} is established in \cite{gubinelli2020infinitesimal}. 
(See also \cite{funaki2017coupled} for another way of definition.)
%Moreover, it is shown that the product of independent space white-noise becomes invariant \cite{funaki2017coupled}.

\subsection{Model Description}
In the sequel, we define our model and state the main result. 
We consider dynamics driven by a single-site potential which satisfies the following assumption. 

\begin{assumption}
\label{ass:diffusion_potential}
Let $V: \mathbb{R}^d \to \mathbb{R}$ be a smooth convex function which satisfies the following conditions.
\begin{itemize}
\item Assume $V(0) = \partial_i V(0)=0 $ and $\partial^2_{ij} V(0) =\delta_{i,j}$ hold where $\delta_{i,j}$ denotes the Kronecker delta and $\partial_i$ denotes the derivative with respect to the $i$-th coordinate. 
Moreover, assume the following algebraic constraint for the third derivatives: for each $i_1,i_2,i_3,i_4\in \{1,\ldots,d\}$,
\begin{equation}
\label{eq:diffusion_algebraic_constraint}
\sum_{k=1}^d \partial^3_{ki_1i_2}V(0)
\partial^3_{ki_3i_4}V(0)
= \sum_{k=1}^d \partial^3_{ki_1i_3}V(0)
\partial^3_{ki_2i_4}V(0). 
\end{equation}
\item 
Assume that there exist constants $C_1>0$ and $C_2 \in \mathbb{R}^d$ such that for any $u\in\mathbb{R}^d$
\begin{equation}
\label{eq:diffusion_assumption_lyapunov}
\Delta V(u) \le C_1(V(u)+1) + C_2 \cdot u.    
\end{equation}
\item 
Assume the derivatives of $V(\cdot)$ up to the fifth order has at most exponential growth, that is, there exist constants $\gamma_V,C=C(\gamma_V)>0$ such that 
\begin{equation}
\label{eq:diffusion_assmption_exponential_growth}
\max_{0\le k \le 5} 
\max_{|\alpha|=k } 
\sup_{u\in\mathbb{R}^d} \big| e^{-\gamma_V|u|} \partial^\alpha V(u) \big| < C.
\end{equation}
Here $\partial^\alpha= \partial_1^{\alpha_1} \cdots \partial_d^{\alpha_d}$ and $|\alpha|=\alpha_1+\cdots+\alpha_d$ for each multi-index $\alpha=(\alpha_1,\ldots,\alpha_d)\in \mathbb{Z}_+^d$.  
In addition, we defined $|u|=\sum_{1\le i\le d} \sum_{1\le j \le n}|u^i_j|$ for each $u=(u^i_j)_{1\le i \le d, j\in\mathbb{T}_n}$. 
%\item 
%Assume that the potential $V$ is bounded from below by a linear function: there exists a constant $C_3>0$ such that $V(u)\ge C_3( |u|-1)$ for any $u\in\mathbb{R}^d$. 
\end{itemize}
\end{assumption}

In what follows, we fix a potential $V$ which satisfies Assumption~\ref{ass:diffusion_potential}.
%Note that the lower bound in the last condition holds when the function $V$ is convex, for example.
Examples of potential which satisfy Assumption~\ref{ass:diffusion_potential} will be given later in Section \ref{sec:diffusion_examples}.

Let $\beta > 0$ be a parameter which is called the inverse temperature.
For a potential which satisfies Assumption \eqref{ass:diffusion_potential}, we set $V_\beta (x)= \beta^{-2} V (\beta x)$. 
Moreover, let $n> 0$ be a scaling parameter and hereafter we take $\beta = n^{-1/2}$. 
In the sequel, we take the infinity temperature limit, which means we let the inverse temperature go to zero. 
In this case, according to Assumption \ref{ass:diffusion_potential}, the scaled potential $V_\beta$ is decomposed into the sum of a quadratic potential and a remainder term which is asymptotically small, by the Taylor expansion, as we mentioned in Introduction.  
Now, let us define our dynamics driven by the potential. 
We consider a periodic boundary condition and let $\mathbb{T}_n \cong \{ 1, \ldots , n \}$ be a discrete torus whose elements are identified by modulo $n$. 
Moreover, let $\mathscr{X} = \mathbb{R}^{d \times n}$ be a state space and we denote its elements by $u =(u^i_j)_{1\le i \le d, 1 \le j \le n}$ and we simply write $u_j = (u^{1}_j, \ldots, u^{d}_j ) \in \mathbb{R}^d$ for each $j = 1, \ldots, n$. 
In this situation, let $L$ be an operator which satisfies  
\begin{equation}
\label{eq:generator}
L f(u) = \frac{1}{2}\sum_{i=1}^d \sum_{j\in \mathbb{T}_n}
(\partial^i_j - \partial^i_{j-1})^2 f(u) 
+ \sum_{i=1}^d \sum_{j\in \mathbb{T}_n} 
(\partial_i V_\beta (u_{j-1}) - \partial_i V_\beta (u_j)) \partial^i_j f(u),
\end{equation}
for each $f \in C^2(\mathscr{X})$.
Here $\partial^i_j$ denotes the derivative with respect to $u^i_j$. 
Then, let $u^n =(u^{n,i}_j(t):t\ge 0)_{i,j}$ be a diffusion process whose infinitesimal generator is given by $L_n = n^2 L$. 
Here, we give a comment on the construction of the above dynamics. 
To show that there is no explosion for the diffusion, it suffices to show the existence of a Lyapunov function $\Psi: \mathscr{X} \to \mathbb{R}_+\coloneqq [0,\infty)$ which satisfies $L\Psi \le C\Psi$ for some $C>0$. 
(See \cite[Chapter~24]{varadhan1980lectures}.)
This is achieved by 
\begin{equation*}
\Psi (u) = \sum_{j\in \mathbb{T}_n} (V(u_j)+1+ (1/C_1)C_2 \cdot u_j),
\end{equation*}
where $C_1$ and $C_2$ are the constants in \eqref{eq:diffusion_assumption_lyapunov} in Assumption \ref{ass:diffusion_potential}.
Indeed, for each $u\in \mathbb{R}^d$, we have $\Psi(u) \ge 0$ since $\Delta V(u)\ge 0$ by convexity and  
\begin{equation*}
\begin{aligned}
L\Psi(u) 
&= \sum_{i=1}^d \sum_{j\in \mathbb{T}_n} (\partial_i V_\beta(u_{j-1}) - \partial_i V_\beta(u_j))\partial_i V_\beta(u_j) 
+ \sum_{j\in \mathbb{T}_n} \Delta V(u_j) \\
&= - \frac{1}{2}\sum_{i=1}^d \sum_{j\in \mathbb{T}_n} (\partial_i V_\beta(u_{j-1}) - \partial_i V_\beta (u_j))^2 
+ \sum_{j\in \mathbb{T}_n} \Delta V(u_j)
\le C_1 \Psi(u),         
\end{aligned}
\end{equation*}
and thus there is no explosion. 
Moreover, the diffusion process is identified with the following system of It\^{o}-type stochastic differential equations. 
\begin{equation}
\label{eq:SDE_model}
du^{i}_j = (\partial_i V_\beta (u_{j-1}) - \partial_i V_\beta(u_j)) dt 
+ (dB_j^{i}(t) - dB_{j-1}^{i}(t) ) ,
\end{equation}
where $(i,j) \in \{ 1,\ldots, d\} \times \{1, \ldots, n\}$ and $\{ B^i_j \}_{1\le i\le d, 1\le j \le n} $ denotes a family of independent standard Brownian motions. 
Next, we mention the family of invariant measures of the dynamics. 

%%%%%%%%%%%%%%%%%%%%%
%  static estimate  %
%%%%%%%%%%%%%%%%%%%%%
\subsection{Invariant Measure and Static Estimates}
\label{sec:diffusion_static_estimate}
In what follows, we fix $\lambda = (\lambda^i)_{1 \le i \le d} \in \mathbb{R}^d$, and let $\nu_{\beta,\lambda}$ be a product probability measure on $\mathbb{R}^{d \times n}$ which has the following form.  
%\Add{(write the restriction of parameters first and then define the measures properly.)}\\
\begin{equation}
\label{eq:inv_meas}
\nu_{\beta,\lambda} (du) 
= \prod_{j=1}^n \frac{1}{Z_{\beta, \lambda}}
\exp ( - V_\beta (u_j) + \lambda \cdot u_j ) du_j ,
\end{equation}
where 
\begin{equation*}
Z_{\beta, \lambda} = \int_{\mathbb{R}^d} \exp (-V_\beta(u) + \lambda\cdot u) du ,   
\end{equation*}
is the partition function that makes $\nu_{\beta,\lambda}$ a probability measure, and $\beta>0$ is the inverse temperature whose value is taken in such a way that $Z_{\lambda,\beta}$ is finite so that the measure $\nu_{\beta,\lambda}$ is well-defined. 
The next result shows that the convexity of the nonlinear potential $V$ assures that when $\beta$ is sufficiently small the measure $\nu_{\beta,\lambda}$ is well-defined and a uniform exponential moment bound holds. 

\begin{lemma}
\label{lem:diffusion_uniform_moment_bound}
For any $\gamma>0$, there exists $C_\gamma>0$ and $\beta_c>0$ such that 
\begin{equation*}
\sup_{\beta<\beta_c} E_{\nu_{\beta,\lambda}} \big[e^{\gamma |u|} \big] < C_\gamma.
\end{equation*}
\end{lemma}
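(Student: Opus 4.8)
The plan is to exploit the product structure of $\nu_{\beta,\lambda}$ and reduce the claim to a single-site exponential moment bound that is uniform in the temperature. Since $\nu_{\beta,\lambda}$ is a product over the sites $j\in\mathbb{T}_n$ and $|u|=\sum_{j}|u_j|$ with $|u_j|=\sum_i|u^i_j|$, one has $E_{\nu_{\beta,\lambda}}[e^{\gamma|u|}]=(E_{\nu_{\beta,\lambda}}[e^{\gamma|u_1|}])^n$, so it suffices to prove
\[
\sup_{\beta<\beta_c} \frac{1}{Z_{\beta,\lambda}}\int_{\mathbb{R}^d} e^{\gamma|x|}\,e^{-V_\beta(x)+\lambda\cdot x}\,dx<\infty,
\]
which simultaneously gives $Z_{\beta,\lambda}<\infty$, i.e.\ that $\nu_{\beta,\lambda}$ is well defined. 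After the change of variables $y=\beta x$, using $V_\beta(x)=\beta^{-2}V(\beta x)$ and $\gamma|x|=(\gamma/\beta)|y|$, the single-site quantity becomes the ratio of $\int e^{(\gamma/\beta)|y|}e^{-\beta^{-2}V(y)+\beta^{-1}\lambda\cdot y}\,dy$ over $\int e^{-\beta^{-2}V(y)+\beta^{-1}\lambda\cdot y}\,dy$. The heuristic is that the Gibbs factor $e^{-\beta^{-2}V(y)}$ concentrates $y$ in an $O(\beta)$-neighbourhood of the origin, where $(\gamma/\beta)|y|=O(1)$ and the ratio stays bounded; the task is to turn this into a uniform estimate using only the convexity of $V$ together with $V(0)=0$, $\nabla V(0)=0$ and $\nabla^2V(0)=\mathrm{Id}$.

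The first step is to record two consequences of Assumption~\ref{ass:diffusion_potential}. Because $\nabla^2V$ is continuous and $\nabla^2V(0)=\mathrm{Id}$, there are $r_0,c_0,C_0>0$ with $\nabla^2V\ge 2c_0\,\mathrm{Id}$ on $\{|y|\le r_0\}$, whence $c_0|y|^2\le V(y)\le C_0|y|^2$ there. Because $V$ is convex with a nondegenerate minimum at $0$, a one-dimensional argument along the rays $t\mapsto V(t\omega)$, $|\omega|=1$, combined with compactness of the unit sphere, upgrades this to a global linear lower bound consistent with the quadratic one at the radius $r_0$, namely $V(y)\ge 2c_0r_0|y|-c_0r_0^2$ for $|y|\ge r_0$; the crucial point is that the intercept $-c_0r_0^2$ is exactly the quadratic value at $|y|=r_0$, so no spurious additive constant is introduced. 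These bounds first make the denominator integral finite for small $\beta$ (the linear term beats the $\beta^{-1}\lambda\cdot y$ term), and, restricting it to $\{|y|\le\beta\}$ where the exponent is bounded below by $-C_0-|\lambda|$, yield $Z_{\beta,\lambda}\ge c\,\beta^d$ uniformly in $\beta<\beta_c$.

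It then remains to bound the numerator by $C\beta^d$. On $\{|y|\le r_0\}$ I would insert the quadratic lower bound and rescale $y=\beta w$, turning the contribution into $\beta^d\int e^{(\gamma+|\lambda|)|w|-c_0|w|^2}\,dw\le C_\gamma\beta^d$, which matches the denominator. The main obstacle is the far-field region $\{|y|>r_0\}$, where the factor $e^{(\gamma/\beta)|y|}$ is large: here I would use the global linear lower bound to write the integrand as $e^{c_0r_0^2\beta^{-2}}\,e^{((\gamma+|\lambda|)\beta^{-1}-2c_0r_0\beta^{-2})|y|}$ and observe that for $\beta$ small the coefficient of $|y|$ is negative of order $\beta^{-2}$, so the integral is dominated by its value at the boundary $|y|=r_0$, of size $e^{(\gamma+|\lambda|)r_0\beta^{-1}-c_0r_0^2\beta^{-2}}$, hence of order $e^{-c_0r_0^2\beta^{-2}}$ times a polynomial in $\beta$ and negligible against $\beta^d$. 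This is exactly where the consistency of the two lower bounds is used: the prefactor $e^{c_0r_0^2\beta^{-2}}$ created by the intercept is cancelled by the decay coming from the slope, leaving a genuinely small tail rather than a diverging one. Combining the numerator bound $C\beta^d$ with the denominator bound $c\beta^d$ gives the uniform single-site estimate, and raising it to the $n$-th power yields the assertion.
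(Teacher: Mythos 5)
Your argument is correct, and it takes a genuinely different route from the paper's. The paper works in the unscaled variables: it fixes a distant point $u_0$ (large compared with $\lambda$, and, for the moment bound, implicitly large compared with $\gamma$ as well), Taylor-expands $\nabla V_\beta$ there to obtain $\mathrm{sgn}(u_0^i)\,\partial_i V_\beta(u_0)\ge |u_0^i|/2$ uniformly for small $\beta$, and then applies the supporting-hyperplane inequality $V_\beta(u)\ge \nabla V_\beta(v)\cdot(u-v)$ (valid since $V_\beta\ge 0$ by convexity) with $v$ adapted to the sign pattern of $u$, producing a linear lower bound $V_\beta(u)\ge 2\sum_i|\lambda^i u^i|-C$ that is uniform in $\beta$. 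This yields a $\beta$-independent $L^1(\mathbb{R}^d)$ dominating function, and dominated convergence then gives both $\lim_{\beta\to 0}Z_{\beta,\lambda}=e^{|\lambda|^2/2}$ (so the partition function is bounded away from zero) and the uniform bound on the moment integral. You instead rescale $y=\beta x$ and run a quantitative Laplace-type estimate: strong convexity on a ball $\{|y|\le r_0\}$ (from $\nabla^2 V(0)=\mathrm{Id}$ and continuity of $\nabla^2 V$) gives the matching $c\,\beta^d$ lower bound for the denominator and $C_\gamma\beta^d$ upper bound for the near-field numerator, while the convex ray bound $V(y)\ge 2c_0r_0|y|-c_0r_0^2$ for $|y|\ge r_0$ makes the far field superexponentially small; your bookkeeping there is right, since the boundary exponent $(\gamma+|\lambda|)r_0\beta^{-1}-c_0r_0^2\beta^{-2}$ beats any power of $\beta$, and the intercept $-c_0r_0^2$ being the quadratic value at $|y|=r_0$ is indeed what prevents a diverging prefactor. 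Both proofs rest on the same two inputs (global convexity and the normalized nondegenerate Hessian at the origin); the paper's dominated-convergence route is softer and shorter, while yours is more quantitative, makes the Gaussian concentration at scale $\beta$ explicit, and has the advantage that the same $r_0,c_0$ serve every $\gamma$, with only $\beta_c$ depending on $\gamma$ and $\lambda$, rather than retuning the auxiliary point $u_0$ against them.

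One sentence you should delete is the last one: raising the single-site bound to the $n$-th power gives $C_\gamma^n$, which is not uniform in the regime actually used in the paper, where $\beta=n^{-1/2}$ so that $n=\beta^{-2}\to\infty$ as $\beta\to 0$. But no such step is needed. As the paper's own proof makes explicit by writing $E_{\nu_{\beta,\lambda}}[e^{\gamma|u|}]=Z_{\beta,\lambda}^{-1}\int_{\mathbb{R}^d}e^{-V_\beta(u)+\gamma|u|+\lambda\cdot u}\,du$, Lemma \ref{lem:diffusion_uniform_moment_bound} is a statement about the single-site marginal (despite the configuration-wide definition of $|u|$ appearing in Assumption \ref{ass:diffusion_potential}), and only per-site exponential moments such as $E_{\nu_n}[e^{2\gamma_V|u_j|}]$ are invoked downstream, e.g.\ in the estimate \eqref{eq:error}. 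Your single-site estimate is therefore exactly the content of the lemma.
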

\begin{proof}%[Proof of Lemma \ref{lem:diffusion_uniform_moment_bound}]
First, we show that the partition function has a uniform bound. 
Let we choose $u_0\in\mathbb{R}^d$ in such a way that $|u_0^i|\ge4|\lambda^i|$ for each $i=1,\ldots,d$.
Moreover, we assume $\beta>0$ is sufficiently small so that 
\begin{equation*}
\beta M_* d |u_0|^2 \le 
\min_{1\le i\le d} |u_0^i| ,
\end{equation*}
where  
\begin{equation*}
M_*= \max_{1\le i_1,i_2,i_3\le d} \sup_{u\in \mathbb{R}^d, |u|\le |u_0| } 
\big| \partial^3_{i_1i_2i_3} V(u) \big|,  
\end{equation*}
and $|u_0|^2=\sum_{1\le i\le d}(u_0^i)^2$ is the usual Euclidean norm.  
Recall that Taylor expansion for $V_\beta(u_0)$ yields
\begin{equation*}
\partial_i V_\beta(u_0)
= u_0^i + \frac{\beta}{2} \sum_{k_1,k_2=1}^d 
\partial^3_{ik_1k_2}V(\delta u_0) u_0^{k_1}u_0^{k_2}.  
\end{equation*}
for some $\delta=\delta(u_0)\in(0,1)$. 
This has the bound 
\begin{equation*}
\mathrm{sgn}(u_0^i) \partial_i V_\beta(u_0)
\ge \mathrm{sgn}(u_0^i) u_0^i - \frac{1}{2} \beta M_* d  |u_0|^2
\ge \frac{|u_0^i|}{2},
\end{equation*}
where we used the elementary bound $\sum_{1\le i,j\le d}a_ia_j=\big(\sum_{1\le i \le d}a_i\big)^2\le d\big(\sum_{1\le i\le d}a_i^2\big)$ for each $a\in\mathbb{R}^d$.
%Similarly, we have that for sufficiently small $\beta$
%\begin{equation*}
%-\partial_i V_\beta(u_0-2u_0^ie_i) 
%\ge -\frac{u_0^i}{2},
%\end{equation*}
In the sequel, let $\{ e_i\}_{1\le i \le d}$ be the standard normal basis of $\mathbb{R}^d$. 
Then, we have the bound 
\begin{equation*}
\begin{aligned}
\nabla V_\beta \bigg( \sum_{i=1}^d \mathrm{sgn}(u^i) |u^i_0| e_i \bigg) \cdot u
&= \sum_{i=1}^d u^i \partial_i V_\beta \bigg( \sum_{i=1}^d \mathrm{sgn}(u^i) |u^i_0| e_i \bigg) \\
&\ge \sum_{i=1}^d \frac{|u_0^i|}{2} |u^i|
\ge 2 \sum_{i=1}^d |\lambda^i u^i|,
\end{aligned}
\end{equation*}
for each $u\in\mathbb{R}^d$. 
%Consequently, we have $|\partial_i V_\beta(u_0)|\ge 2\lambda^i$ for each $i=1,\ldots,d$ so that $\nabla V_\beta(u_0) \ge 2 \lambda$. 
On the other hand, note that the potential $V$ is non-negative by the convexity.
Thus, we have that 
\begin{equation*}
V_\beta(u) \ge 
V_\beta(v) + \nabla V_\beta(v)\cdot (u-v)
\ge \nabla V_\beta(v) \cdot (u-v), 
\end{equation*}
for any $u,v\in\mathbb{R}^d$. 
As a consequence, for each $u\in\mathbb{R}^d$, taking $v= \sum_{1\le i\le d}\mathrm{sgn}(u^i)u_0^i e_i$ in the above bound, we have 
\begin{equation*}
V_\beta(u)
\ge \sum_{1\le i\le d} 2|\lambda^i u^i| - C,
\end{equation*}
with some constant $C=C(u_0)$. 
Hence, the partition function has the bound   
\begin{equation*}
Z_{\beta,\lambda}
= \int_{\mathbb{R}^d} e^{-V_\beta(u) + \lambda\cdot u}
du 
\le C \int_{\mathbb{R}^d} e^{-2\sum_{1\le i\le d}|\lambda^iu^i| + \lambda \cdot u } du. 
\end{equation*}
In particular, the density function  $\varphi_{\beta,\lambda}(u)\coloneqq e^{-V_\beta(u)+\lambda \cdot u}$ inside the integral of $Z_{\beta,\lambda}$ is bounded by an $L^1(\mathbb{R}^d)$-function, which is independent of $\beta$. 
Noting $\lim_{\beta \to0}\varphi_{\beta,\lambda}(u)= \exp (-|u|^2/2 + \lambda\cdot u)$ for each $u\in\mathbb{R}^d$, by the dominated convergence theorem, we have 
\begin{equation*}
\lim_{\beta\to0} Z_{\beta,\lambda}
= \int_{\mathbb{R}^d} \lim_{\beta\to0}\varphi_{\beta,\lambda}(u)du
= e^{|\lambda|^2/2}. 
\end{equation*}

Now, we estimate the exponential moment
\begin{equation*}
E_{\nu_{\beta,\lambda}}\big[ e^{\gamma |u| }\big]
= \frac{1}{Z_{\beta,\lambda}} \int_{\mathbb{R}^d}
e^{ - V_\beta(u) + \gamma |u| + \lambda \cdot u } du .
\end{equation*}
By a similar argument for the partition function, note that the density $\exp (-V_\beta(u)+ \gamma|u|+\lambda \cdot u) $ is bounded by an $L^1(\mathbb{R}^d)$ function which is independent of $\beta$. 
Hence, combining with the boundedness of the partition function we complete the proof. 
\end{proof}

In what follows, we further assume that the inverse temperature $\beta$ is so small that the assertion of Lemma \ref{lem:diffusion_uniform_moment_bound} holds with $\gamma=2\gamma_V$ where $\gamma_V$ is the constant in Assumption \ref{ass:diffusion_potential}. 
%In the proof of Lemma \ref{lem:diffusion_uniform_moment_bound}, we proved that we can exchange the order of integration and the high temperature limit for the partition function. 
%This enables us to show the next result as a corollary. 
%\begin{corollary}
%\label{cor:diffusion_weak_convergence}
%For fixed $\lambda \in \mathbb{R}$, the invariant measure $\nu_{\beta,\lambda}$ converges to the product measure whose common marginal is the Gaussian with mean $\lambda$ and unit variance. 
%\end{corollary}
It is straightforward that the measure $\nu_{\beta,\lambda}$ is invariant under the dynamics of \eqref{eq:SDE_model}, see \cite{diehl2017kardar}. 
Moreover, by Lemma \ref{lem:diffusion_uniform_moment_bound} we have $\lim_{\beta\to0}E_{\beta,\lambda}[u]=\lambda$.
Hence, the family of invariant measures $\{ \nu_{\beta, \lambda} \}$ is parametrized by density. 
In the sequel, we fix $\lambda$ as a constant and take $\beta=n^{-1/2}$, and we use the short-hand notation $\nu_n = \nu_{n^{-1/2},\lambda}$. 

%\begin{remark} %The uniform bound on moments in Assumption \ref{ass:diffusion_uniform_moment_bound} is proved in the case when the potential is given as a perturbation of quadratic function \cite{caputo2003uniform} and then extended to more general case \cite{menz2013uniform}. In addition, for the O'Connell-Yor polymer case, the same bound is obtained via an explicit calculation in \cite{jara2020stationary}. \end{remark}

\subsection{Main Results}
The primary aim of this chapter is to study limiting behavior of fluctuations of our conserved quantities. 
Before that, we give comments on the corresponding hydrodynamics under the Euler scaling.
Let $\pi^{n,i}_t$ be empirical measure on $\mathbb{T}$ which is defined by 
\begin{equation*}
\pi^{n,i}_t (dx)
= \frac{1}{n} \sum_{j\in \mathbb{T}_n} u^{n,i}_j(n^{-1}t) \delta_{j/n}(dx).
\end{equation*}
Here, $\delta$ denotes the Dirac measure.
Then, it is heuristically expected that the hydrodynamic limit equation is given by a transport equation: the process $(\pi^{n,i}_t)_{t\ge 0}$ converges in probability to $(\rho^i(t,x)dx)_{t\ge 0}$ as $n\to \infty$ where the density $\rho^i = \rho^i(t,x)$ satisfies 
\begin{equation*}
\partial_t \rho^i = - \partial_x \rho^i .
\end{equation*}
Meanwhile, define a modified empirical measure $\tilde{\pi}^{n,i}_t$ by 
\begin{equation}
\label{eq:modified_emp}
\tilde{\pi}^{n,i}_t (dx) 
= \frac{1}{n} \sum_{j\in \mathbb{T}_n} u^{n,i}_j(n^{-1/2}t) \delta_{(j-n^{3/2}t)/n}(dx).
\end{equation}
In order to use the Einstein convention, we use the following short-hand notation for the third and fourth derivatives: 
\begin{equation}
\label{eq:omit_derivatives}
\gamma^{i_1}_{i_2 i_3} = (1/2)\partial^3_{i_1 i_2 i_3}V(0), \quad
\delta^{i_1}_{i_2 i_3 i_4} = (1/6) \partial^4_{i_1 i_2 i_3 i_4}V(0) .
\end{equation}
Note that constants $\gamma^{i_1}_{i_2 i_3}$ and $\delta^{i_1}_{i_2 i_3 i_4}$ are symmetric in all indices since one can change the order of differentiation. 
Then, the hydrodynamic limit equation which corresponds to the modified empirical measure \eqref{eq:modified_emp}, is given by 
\begin{equation}
\label{eq:diffusion_hdl_equation_heuristic}
\partial_t \tilde{\rho}^i = - \partial_x \Lambda^i_{i_1} \tilde{\rho}^{i_1} ,
\end{equation}
where the matrix $\Lambda=(\Lambda^{i_1}_{i_2})_{i_1,i_2}$ is defined by 
\begin{equation}
\label{eq:matrix_lambda}
\Lambda^{i_1}_{i_2}= 2 \gamma^{i_1}_{i_2 i_3} \lambda^{i_3}. 
\end{equation} 
Since the matrix $\Lambda$ is symmetric, it is diagonalized by using an orthogonal matrix $Q$ as $Q^{-1}\Lambda Q = \mathrm{diag}[\eta^1, \ldots, \eta^d]$ where $(\eta^i)_{i=1,\ldots, d}$ are eigenvalues of $\Lambda$. 
Here we roughly explain how the matrix $\Lambda$ and the hydrodynamic equation appear in the limit. 
Let us denote the integration with respect to $\tilde{\pi}^{n,i}_t$ as 
\begin{equation*}
\langle \pi^{n,i}_t, G \rangle
= \frac{1}{n} \sum_{j\in\mathbb{T}_n} 
u^{n,i}_j(n^{-1/2}t) G\big( (j-n^{3/2}t)/n\big), 
\end{equation*}
for each smooth function $G:\mathbb{T}\to \mathbb{R}$. 
Then, applying Dynkin's martingale formula, we have that 
\begin{equation*}
\langle \pi^{n,i}_t, G \rangle
- \langle \pi^{n,i}_0, G \rangle 
= \int_0^t (\partial_s + L_n) \langle \pi^{n,i}_s, G \rangle
ds + M^G_t ,
\end{equation*}
where $M^G_t$ is a martingale. 
The integrand in the first term of the last display is calculated as 
\begin{equation*}
\begin{aligned}
(\partial_t + L_n) \langle \pi^{n,i}_t, G\rangle 
&=\frac{1}{\sqrt{n}} \sum_{j\in\mathbb{T}_n}
\bigg( \partial_i V_\beta(u_j(t)) \big( n(G_{j+1}-G_j)(t)\big)
- u^i_j(t) \partial_x G_j(t) \bigg) \\
&= \frac{1}{\sqrt{n}} \sum_{j\in\mathbb{T}_n}
\big( \partial_i V_\beta(u_j(t)) - u^i_j(t) \big)
\partial_x G_j(t) + o_n(1),
\end{aligned}
\end{equation*}
where we used the short-hand notation $G_j(t)=G((j-n^{3/2}t)/n)$ and replaced the discrete derivative by the continuous one. 
Here and in what follows, we denote centered variables with respect to $\nu_n$ by writing bar over a variable, for example, $\overline{u}^i_j=u^i_j-E_{\nu_n}[u^i_j]$. 
Then, note that the quantity inside the parentheses can be expanded as  
\begin{equation*}
\begin{aligned}
\partial_i V_\beta (u_j) - u^i_j
&= \beta \gamma^i_{k_1k_2} u^{k_1}_j u^{k_2}_j + O(\beta^2)\\
&= \beta \gamma^i_{k_1k_2} \overline{u}^{k_1}_j \overline{u}^{k_2}_j + 
\beta \Lambda^i_k u^k_j + O(\beta^2),
\end{aligned}
\end{equation*}
as $\beta\to 0$, and in the second line we neglected some constant term, which does not appear in the limiting equation. 
Hence, the matrix $\Lambda$ appears when centering variables, and, when we take $\beta=n^{1/2}$, the hydrodynamic equation \eqref{eq:diffusion_hdl_equation_heuristic} is expected to be derived. 

Now, we are in a position to consider fluctuation fields of our model.
A natural choice of fluctuation fields naively corresponds to the process $u=(u^i_t:t\in[0,T])_{i=1,\ldots,d}$.
Define $\mathcal{X}^n_\cdot = (\mathcal{X}^{n,1}_\cdot ,\ldots, \mathcal{X}^{n,d}_\cdot) \in C([0,T]: \mathcal{S}^\prime(\mathbb{T})^d)$ by 
\begin{equation}
\label{eq:diffusion_fluctuation_ori}
\mathcal{X}^{n,i}_t (\varphi;f_n)
= \frac{1}{\sqrt{n}} \sum_{j \in \mathbb{T}_n} 
\overline{u}^{n,i}_j (t) \varphi \big ({\textstyle \frac{j-f_nt}{n} }\big),
\end{equation}
for each $\varphi\in \mathcal{S}(\mathbb{T})$ and $i=1,\ldots,d$.
Here the constant $f_n$ is a moving frame which will be taken in a proper way. 
To state our result, let $\Xi=(\Xi^{i_1}_{i_2})_{1 \le i_1,i_2\le d}$ be a matrix which is defined by
\begin{equation}
\label{eq:xi_matrix}
\begin{aligned}
\Xi^{i_1}_{i_2}
&= 3\delta^{i_1}_{i_2k_2k_3}\lambda^{k_2}\lambda^{k_3} 
+ \sum_{k_2=1}^d \frac{14}{5}\delta^{i_1}_{i_2k_2k_2}  
+ \frac{1}{5} \delta^{i_1}_{i_2i_2i_2}  \\
&\quad- 2 \gamma^{i_1}_{i_2k_1} \gamma^{k_1}_{k_2k_3} \lambda^{k_2}\lambda^{k_3} 
- \sum_{k_2=1}^d \frac{18}{5} \gamma^{i_1}_{i_2k_1} \gamma^{k_1}_{k_2k_2} 
- \frac{2}{5} \gamma^{i_1}_{i_2k_1} \gamma^{k_1}_{i_2i_2} . 
\end{aligned}
\end{equation}
Then, the main result of this chapter is the derivation of the coupled stochastic Burgers equation under the condition that the matrices $\Lambda$ and $\Xi$ is proportional to the identity matrix. 

\begin{theorem}
\label{thm:diffusion_sbe_derivation_main}
Assume that the matrices $\Lambda$ and $\Xi$, which are defined by \eqref{eq:matrix_lambda} and \eqref{eq:xi_matrix}, respectively, are proportional to the identity matrix: there exist constants $\eta,\eta^\prime\in \mathbb{R}$ such that
\begin{equation}
\label{eq:diffusion_framing_condition_eta}
\Lambda^{i_1}_{i_2} = \eta \mathbf{1}_{i_1=i_2}, 
\end{equation}
and 
\begin{equation}
\label{eq:diffusion_framing_condition_xi}
\Xi^{i_1}_{i_2} = \eta^\prime \mathbf{1}_{i_1=i_2},
\end{equation}
for each $i_1,i_2\in\{1,\ldots,d\}$. 
Then the sequences of fluctuation fields $\{ \mathcal{X}^n_t (\cdot;f_n):t\in[0,T]\}_n$ with $f_n=n^2+\eta n^{3/2} + \eta^\prime n$ converges in distribution in $C([0,T],\mathcal{S}^\prime(\mathbb{T})^d)$ to the unique stationary energy solution of the coupled stochastic Burgers equation 
\begin{equation}
\label{eq:diffusion_coupled_kpz_thm}    
\partial_t u^i = \frac{1}{2} \partial_x^2 u^i 
- \gamma^i_{i_1 i_2} \partial_x (u^{i_1} u^{i_2})
%- \Xi^{i}_{i_1} \partial_x u^{i_1} 
+ \partial_x \dot{W}^i .
\end{equation}
%for some constant symmetric matrix $\Xi = (\Xi^i_j)_{1\le i,j \le d}$. 
Here $\{\dot{W}^i\}_{1\le i\le d}$ denotes the family of independent space-time white-noise. 
\end{theorem}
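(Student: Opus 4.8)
The plan is to follow the energy-solution strategy of Gon\c{c}alves--Jara, adapted to the multi-species diffusive setting, and to identify the scaling limit through a martingale characterization. First I would apply Dynkin's formula to the process $t\mapsto \mathcal{X}^{n,i}_t(\varphi;f_n)$ of \eqref{eq:diffusion_fluctuation_ori}, writing
\begin{equation*}
\mathcal{X}^{n,i}_t(\varphi;f_n) = \mathcal{X}^{n,i}_0(\varphi;f_n) + \int_0^t (\partial_s + L_n)\mathcal{X}^{n,i}_s(\varphi;f_n)\,ds + M^{n,i}_t(\varphi),
\end{equation*}
with $M^{n,i}$ a martingale. Since $\mathcal{X}^{n,i}$ is linear in $u$, the second-order part of $L_n$ annihilates it, so the entire drift is produced by the transport part $n^2(\partial_i V_\beta(u_{j-1})-\partial_i V_\beta(u_j))$. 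After a discrete summation by parts this becomes $n^{3/2}\sum_j \partial_i V_\beta(u_j)(\varphi^n_{j+1}-\varphi^n_j)$ with $\varphi^n_j=\varphi((j-f_ns)/n)$, and I would Taylor expand both the increment $\varphi^n_{j+1}-\varphi^n_j = n^{-1}(\partial_x\varphi)^n_j + \tfrac12 n^{-2}(\partial_x^2\varphi)^n_j + O(n^{-3})$ and the drift $\partial_i V_\beta(u) = u^i + \beta\gamma^i_{k_1k_2}u^{k_1}u^{k_2} + \beta^2\delta^i_{k_1k_2k_3}u^{k_1}u^{k_2}u^{k_3}+O(\beta^3)$ coming from Assumption \ref{ass:diffusion_potential} and the notation \eqref{eq:omit_derivatives}, recalling $\beta = n^{-1/2}$.

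The central bookkeeping step is to sort the resulting drift by powers of $n$ after centering the monomials with respect to $\nu_n$. Three kinds of \emph{transport} terms, each proportional to $\mathcal{X}^{n,k}_s(\partial_x\varphi;f_n)$, survive: one of order $n$ from the harmonic part $u^i$; one of order $\sqrt{n}$ whose matrix coefficient is exactly $\Lambda$ of \eqref{eq:matrix_lambda}, produced by linearizing the quadratic term around the mean $E_{\nu_n}[u]\to\lambda$; and one of order $1$ whose coefficient is $\Xi$ of \eqref{eq:xi_matrix}, assembled from the linearization of the cubic term $\delta$ together with the covariance corrections and a second-order renormalization effect of the Boltzmann--Gibbs replacement that generates the $\gamma\gamma$ contributions. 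Against these, the explicit time dependence contributes $-(f_n/n)\mathcal{X}^{n,i}_s(\partial_x\varphi;f_n)$, and since $f_n/n = n + \eta n^{1/2} + \eta'$ the hypotheses $\Lambda = \eta\,\mathrm{Id}$ and $\Xi = \eta'\,\mathrm{Id}$ of \eqref{eq:diffusion_framing_condition_eta}--\eqref{eq:diffusion_framing_condition_xi} make all transport terms cancel order by order. What remains is the diffusion term $\tfrac12\mathcal{X}^{n,i}_s(\partial_x^2\varphi;f_n)$, arising from the curvature of $\varphi$ against the harmonic drift, and the quadratic term $\gamma^i_{k_1k_2}\sum_j \overline{u}^{k_1}_j\overline{u}^{k_2}_j(\partial_x\varphi)^n_j$ at order one.

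The hard part is to control this quadratic term, which is precisely the content of the second-order Boltzmann--Gibbs principle of Section \ref{sec:BG}. There I would show that the time-integrated quadratic functional can be replaced, up to an $L^2(\mathbb{P})$ error vanishing as $n\to\infty$, by the singular product $\mathcal{A}^{(k_1,k_2)}$ of Proposition \ref{prop:quadratic}, and simultaneously that the energy estimate \textbf{(EC)} holds; the uniform exponential moment bound of Lemma \ref{lem:diffusion_uniform_moment_bound} and the algebraic constraint \eqref{eq:diffusion_algebraic_constraint}, which ensures the effective diffusion matrix governing the replacement is symmetric, enter exactly at this point. For the martingale part, a direct computation of the carr\'e du champ gives $\langle M^{n,i}(\varphi)\rangle_t \to t\,\|\partial_x\varphi\|_{L^2(\mathbb{T})}^2$, and because distinct species are driven by independent Brownian families the limiting martingales are mutually independent, producing the noises $\partial_x\dot{W}^i$.

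Finally, in Sections \ref{sec:diffusion_tightness} and \ref{sec:identification}, I would establish tightness of the fields, the martingales and the quadratic functionals using the moment bounds and the energy estimate, and identify every limit point as a stationary energy solution in the sense of Definition \ref{def:energy_solution}: condition \textbf{(S)} follows from stationarity together with the central limit theorem for $\nu_n$, which converges to the standard Gaussian under the $n^{-1/2}$ scaling, while items (2)--(3) are read off from the martingale decomposition above and its time reversal. Observing that the coupling $\Gamma^i_{k\ell}=-2\gamma^i_{k\ell}$ is totally symmetric, so that the trilinear condition \eqref{eq:trilinear} holds automatically, the uniqueness result of \cite{gubinelli2020infinitesimal} pins down the limit and upgrades the subsequential convergence to convergence in distribution, which is the assertion of the theorem. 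I expect the multi-species second-order Boltzmann--Gibbs principle, and in particular the correct derivation of $\Xi$ including its renormalization terms, to be the principal obstacle.
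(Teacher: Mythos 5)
Your overall architecture coincides with the paper's: Dynkin's decomposition, cancellation of the order-$n$ and order-$\sqrt{n}$ transport terms by the frame $f_n/n = n + \eta\sqrt{n} + \eta'$ under the two proportionality hypotheses, a second-order Boltzmann--Gibbs principle, tightness, identification via conditions \textbf{(S)} and \textbf{(EC)} and Definition \ref{def:energy_solution}, and uniqueness from \cite{gubinelli2020infinitesimal} through $\Gamma^i_{k\ell}=-2\gamma^i_{k\ell}$. But there is a genuine gap at exactly the step you flag at the end: you propose to run the Boltzmann--Gibbs principle directly on the same-site quadratic field $\gamma^i_{k_1k_2}\sum_j\overline{u}^{k_1}_j\overline{u}^{k_2}_j(\partial_x\varphi)^n_j$. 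The paper's Theorem \ref{thm:2BG} is stated, and is provable by its method, only for products $\overline{W}^{i_1}_{j-1}\overline{W}^{i_2}_j$ of the \emph{potential-gradient variables at distinct sites}: its proof rewrites the telescoping differences $W^{i_2}_{j+k}-W^{i_2}_{j+k+1}$ as derivative operators $\partial^{i_2}_j-\partial^{i_2}_{j+1}$ via the integration by parts of Lemma \ref{lem:ibp} and absorbs them into the Dirichlet form; this requires the coefficient $F^{i_1}_j$ to be annihilated by $\partial^{i_2}_j-\partial^{i_2}_{j+1}$, which fails for same-site products, and for $u$-variables there is no analogue of Lemma \ref{lem:ibp} under the non-Gaussian measure $\nu_n$. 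The paper therefore inverts the expansion \eqref{eq:diffusion_taylor_w} and expresses $W^i_j-u^i_j$ as in \eqref{eq:w-u_sketch}, with the quadratic part written through $\overline{W}^{i_1}_{j-1}\overline{W}^{i_2}_j$; the conversion of same-site monomials into spatially scattered ones is performed in Section \ref{sec:diffusion_taylor_expansion} by adding $L$-exact terms (the identities for $L(u^{i_1}_{j-1}u^{i_2}_j u^{i_3}_{j+1})$ and their relatives, negligible by Lemma \ref{lem:generator} and Kipnis--Varadhan), and it is precisely this chain of replacements that produces the rational coefficients $14/5$, $1/5$, $18/5$, $2/5$ in $\Xi$ of \eqref{eq:xi_matrix}. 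Your description of $\Xi$ as ``linearization of the cubic term plus covariance corrections'' does not generate these contributions, so without this fluctuation--dissipation mechanism your frame constant $\eta'$ would come out wrong and the order-one linear term would not cancel.

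A secondary inaccuracy: you place the algebraic constraint \eqref{eq:diffusion_algebraic_constraint} inside the Boltzmann--Gibbs replacement as symmetry of an ``effective diffusion matrix''. In the paper it plays no role there; it is used in Section \ref{sec:diffusion_taylor_expansion} to ensure that $\gamma^i_{ki_1}\gamma^k_{i_2i_3}$ is fully symmetric in its lower indices, which is what licenses the index shifts when same-site cubic terms are redistributed over neighboring sites. The remainder of your outline --- the symmetric part converging to $\tfrac12\mathcal{X}^{n,i}(\partial_x^2\varphi)$ after replacing $\overline{W}$ by $\overline{u}$ at cost $O(n^{-1/2})$, the quadratic variation limit $t\|\partial_x\varphi\|^2_{L^2(\mathbb{T})}$, conditions \textbf{(S)}/\textbf{(EC)}, and the uniqueness step --- matches the paper.
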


\begin{remark}
When the second framing condition \eqref{eq:diffusion_framing_condition_xi} does not hold, we can formally derive the limiting SPDE with a drift term 
\begin{equation*}
\partial_t u^i = \frac{1}{2} \partial_x^2 u^i 
- \gamma^i_{i_1 i_2} \partial_x (u^{i_1} u^{i_2})
- \Xi^{i}_{i_1} \partial_x u^{i_1} 
+ \partial_x \dot{W}^i  ,
\end{equation*}
instead of \eqref{eq:diffusion_coupled_kpz_thm}. 
However, the uniqueness of the stationary energy solution is lacking when a drift term exists, so that the full convergence of the discrete sequences is not assured.
This is why we imposed the condition \eqref{eq:diffusion_framing_condition_xi} in addition to \eqref{eq:diffusion_framing_condition_eta}. 
\end{remark}

\begin{remark}
The limiting equation~\eqref{eq:diffusion_coupled_kpz_thm} satisfies the so-called trilinear condition 
\begin{equation*}
\gamma^{i_1}_{i_2i_3} =
\gamma^{i_1}_{i_3i_2} = 
\gamma^{i_2}_{i_1i_3},
\end{equation*}
since the potential is smooth and one can change the order of derivatives. 
Under the trilinear condition, \cite{gubinelli2020infinitesimal} proved that the stationary energy solution of \eqref{eq:diffusion_coupled_kpz_thm} is unique. 
In this chapter, the dynamics is considered with the periodic boundary condition whose limiting space is the compact torus, though, central procedures of the derivation of an SPDE will analogously be established even for the full-line case.
The reason for choosing the periodic boundary is because the uniqueness of the stationary energy solution of the coupled KPZ, or SBE, on the real line is not demonstrated, while the derivation procedure is analogous. 
Once the uniqueness of such an infinite volume martingale problem is established, then we expect that for the full-line case \textit{mutatis mutandis} the coupled KPZ equation on the real line is derived.  
\end{remark}

\subsection{Examples}
\label{sec:diffusion_examples}
In the sequel, we give several examples for our main theorem. 
First, we restrict to the $d=1$ case.
Then, note that the algebraic condition \eqref{eq:diffusion_algebraic_constraint}, and the framing conditions \eqref{eq:diffusion_framing_condition_eta} and \eqref{eq:diffusion_framing_condition_xi} become trivial.

\begin{example}
When $d=1$ and consider the case that the nonlinear function $V$ is given as the so-called FPU-$\alpha$ potential $V(u)=u^2/2+ \alpha u^3 + u^4/4$ with $\alpha\in \mathbb{R}$. 
This potential is convex if and only if $3\alpha^2 \le 1$ and satisfies the conditions \eqref{eq:diffusion_assumption_lyapunov} and \eqref{eq:diffusion_assmption_exponential_growth} in Assumption \ref{ass:diffusion_potential}. 
Moreover, the partition function $Z_{\beta,\lambda}$ is finite if $\beta^{-1}\ge 24|\lambda\alpha|$ and the uniform moment bound Lemma \ref{lem:diffusion_uniform_moment_bound} holds with $\beta_c=24(|\lambda|+\gamma)|\alpha|$. 
\end{example}

\begin{example}[The O'Connell-Yor polymer]
When $d=1$ and the potential is given as the Toda lattice potential $V(u)=e^{-u}-1+u$, the dynamics matches that of the O'Connell-Yor polymer~\cite{o2001brownian}.
In this case, the stochastic Burgers equation has been derived in \cite{jara2020stationary}, so that our result is an extension of this result. 
Note that the Toda lattice potential satisfies conditions \eqref{eq:diffusion_assumption_lyapunov} and \eqref{eq:diffusion_assmption_exponential_growth} in Assumption \ref{ass:diffusion_potential} and that the invariant measure for this choice of potential is well-defined when $\beta^{-1}> \lambda$.
Moreover, we can see that $E_{\nu_{\beta,\lambda}}[e^{\gamma |u|}]$ is bounded by a constant which is independent of $\beta$, provided $\beta^{-1}>\gamma+\lambda$. 
In other words, the assertion in Lemma \ref{lem:diffusion_uniform_moment_bound} holds with $\beta_c=(\gamma+\lambda)^{-1}$. 
\end{example}

Now, we give multi-species examples.

\begin{example}
Assume that the third the fourth derivatives of the potential are diagonalized at the origin: there exist $c_3,c_4 \in \mathbb{R} \setminus \{0\}$ such that $\gamma^{i_1}_{i_2i_3}=c_3$ and $\delta^{i_1}_{i_2i_3i_4}=c_4$ if and only if $i_1=i_2=i_3=i_4$. 
In this case, the algebraic constraint \eqref{eq:diffusion_algebraic_constraint} is automatically satisfied. 
Moreover, assume $\lambda^i=\lambda$ for each $i=1,\ldots,d$. 
Then, the conditions \eqref{eq:diffusion_framing_condition_eta} and \eqref{eq:diffusion_framing_condition_xi} hold with $\eta=2c_3 \lambda$ and $\eta^\prime=3\lambda^2(c_4-c_3^2) + (3c_4-c_3^2)$.
%The limiting equations \eqref{eq:diffusion_coupled_kpz_thm} of this case, however, are turn out to be decoupled at least in a macroscopic level. 
%Indeed, noting the matrix $\Xi$ is also real-symmetric, take an orthogonal matrix $R$ such that $ R^{-1}\Xi R = \mathrm{diag}[\xi^1 , \ldots, \xi^d]$, where $\xi_1, \ldots, \xi_d$ are eigenvalues of $\Xi$. 
%Then, a transformed process $\tilde{u}^i(t,x) = R^i_{i_1} (R^{-1})^{i_1}_{i_2} u^{i_2} (t, x - \xi^{i_1}t)$ satisfies \eqref{eq:diffusion_coupled_kpz_thm} without the linear term $\Xi^i_{i_1} \partial_x u^{i_1}$. 
\end{example}

\begin{example}
Let $d=2$. 
By symmetry, we can set $\gamma^1_{11}=a$, $\gamma^1_{12}=\gamma^1_{21}=\gamma^2_{11}=b$, 
$\gamma^1_{22}=\gamma^2_{12}=\gamma^2_{21}=c$ and $\gamma^2_{22}=d$. 
We consider the case that $\delta^{i_1}_{i_2i_3i_4}= 2\sum_{k}\gamma^k_{i_1i_2}\gamma^k_{i_3i_4}$.
This is always possible according to the algebraic constraint \eqref{eq:diffusion_algebraic_constraint}. 
Moreover, we set $\lambda=0$ for simplicity. 
Then the condition \eqref{eq:diffusion_framing_condition_eta} becomes trivial and we can simply write $\Xi^{i_1}_{i_2} 
= 2 \sum_{k_1,k_2}\gamma^{i_1}_{k_1k_2}\gamma^{i_2}_{k_1k_2}$ in this case.
Therefore, \eqref{eq:diffusion_algebraic_constraint} and \eqref{eq:diffusion_framing_condition_xi} are rewritten as 
\begin{equation*}
\begin{aligned}
ac + bd = b^2 + c^2 ,\quad
a^2 + b^2 = c^2+d^2, \quad 
ab + 2bc + cd = 0.
\end{aligned}
\end{equation*}
These identities are satisfied, by homogeneity, when $(a,b,c,d)$ are proportional to 
\begin{equation*}
(p(p^2+3), p^2-1 , p(p^2-1), -3p^2-1), 
\end{equation*}
for any $p\in\mathbb{R}$. 
%As well, it can be verified that \eqref{eq:diffusion_framing_condition_eta} is satisfied when $\lambda=(\lambda^1,\lambda^2)$ is proportional to $(p,-1)$.  
\if0
In particular, taking $p=-1$, we see that the conditions \eqref{eq:diffusion_framing_condition_eta} and \eqref{eq:diffusion_framing_condition_xi} hold if the potential is proportional to the function 
\begin{equation*}
F(u^1) + F(u^2), 
%\frac{1}{2}\big( (u^1)^2 + (u^2)^2 \big) + c_3 \big( (u^1)^3 + (u^2)^3 \big) + c_4 \big( (u^1)^4 + (u^2)^4 \big),
\end{equation*}
where $F(u)=u^2/2+c_3u^3+c_4u^4$ for any $c_3,c_4$.
Note that this nonlinear potential is convex if and only if $3c_3^2\le 4c_4$. 
\fi
Moreover, note that we can add some higher order polynomials to the above function in order to assure the conditions \eqref{eq:diffusion_assumption_lyapunov}.
After this procedure, the exponential growth condition \eqref{eq:diffusion_assmption_exponential_growth} still holds.
Then, all conditions of Theorem \ref{thm:diffusion_sbe_derivation_main} are satisfied and the coupled SBE is derived from this case. 
Note that in this example, we can find a choice of $\gamma^{i_1}_{i_2i_3}$ and $\delta^{i_1}_{i_2i_3i_4}$ such that the condition \eqref{eq:diffusion_framing_condition_eta} is valid whereas the condition \eqref{eq:diffusion_framing_condition_xi} is not. 
\end{example}

\if0
\begin{example}
Let $d=2$.
In this case the algebraic constraint \eqref{eq:diffusion_algebraic_constraint} gives a non-trivial relation only when $\{i_1,i_2,i_3,i_4\}=\{ 1,1,2,2\}$, which reads
\begin{equation*}
%\begin{cases}
\begin{aligned}
\gamma^1_{11}\gamma^1_{22} + \gamma^2_{11}\gamma^2_{22}
= \gamma^1_{12}\gamma^1_{12} + \gamma^2_{12}\gamma^2_{12}. 
\end{aligned}
%\end{cases}
\end{equation*}
Particularly, we take $\gamma^1_{11}=2$, $\gamma^2_{22}=0$ and $\gamma^1_{12}=\gamma^1_{21}=\gamma^2_{11}=\gamma^2_{12}=\gamma^2_{21}=\gamma^1_{22}=1$ so that the scaled potential $V_\beta$ has the following expansion. 
\begin{equation*}
V_\beta (u)= \frac{1}{2}|u|^2 %\big((u^1)^2+(u^2)^2\big) 
+ \frac{\beta}{3} \big( 2(u^1)^3 + 3(u^1)^2u^2 + 3u^1(u^2)^2 \big)
+ O(\beta^2),
\end{equation*}
for each $u=(u^i)_{i=1,2}$, when $\beta \to 0$. 
Note that this choice of $\gamma^{i_1}_{i_2i_3}$ clearly satisfies the algebraic constraint.
Take $\lambda^1=1$ and $\lambda^2=-1$.
Then the matrix $\Lambda$ satisfies the condition \eqref{eq:diffusion_framing_condition_eta}, but the condition \eqref{eq:diffusion_framing_condition_xi} does not hold.
In this case, the limiting equation is expected to be a coupled SBE whose coupling tensor $\gamma^{i_1}_{i_2i_3}$ is not decoupled, with a linear drift term with a non-diagonal matrix.
\end{example}
\fi

\subsection{Proof Outline}
In the sequel, we write discrete derivative operators as follows. 
\begin{equation*}
\nabla^n \varphi^n_j = \frac{n}{2}(\varphi^n_{j+1} - \varphi^n_{j-1}), \quad 
\Delta^n \varphi^n_j = n^2 (\varphi^n_{j+1} + \varphi^n_{j-1} - 2 \varphi^n_j ). 
\end{equation*}
Moreover, let $W_j = (W_j^1, \ldots, W^d_j)$ where $W^i_j = \partial_i V_\beta (u_j ) $. 
Note that we can compute its expectation as
\begin{equation*}
E_{ \nu_{\beta, \lambda} } [W^i_j] 
= \frac{1}{Z_{\beta,\lambda}} \int_{\mathbb{R}^d}  
\big( \partial_i V_\beta(u_j) - \lambda^i + \lambda^i \big)
e^{-V_\beta(u_j)+ \lambda\cdot u_j} du_j 
= \lambda^i .
\end{equation*}
Then, let $\overline{W}_j = (\overline{W}^1_j ,\ldots, \overline{W}^d_j)$ be the centered variable which is defined by $\overline{W}^i_j = W^i_j - \lambda^i$. 
Recall that we set $L_n = n^2 L$, where $L$ is the infinitesimal generator of the dynamics, which is represented in terms of $W^i_j$ as follows.
\begin{equation*}
L_n = \frac{n^2}{2} \sum_{i=1}^d \sum_{j \in \mathbb{T}_n} 
(\partial^i_{j} - \partial^i_{j-1})^2 
+ n^2 \sum_{i=1}^d \sum_{j \in \mathbb{T}_n} 
(\overline{W}^i_{j-1}- \overline{W}^i_j)\partial^i_j . 
\end{equation*}
Next, let us denote the $L^2 (\nu_n) $-adjoint operator by writing $*$ as super script. 
Then, by a usual integration-by-parts formula, we have $(\partial^i_j)^*=-\partial^i_j + \overline{W}^i_j$.
In particular, a straightforward calculation yields that $L^*$ is represented as 
\begin{equation*}
L_n^* 
= \frac{n^2}{2} \sum_{i=1}^d \sum_{j \in \mathbb{T}_n} 
(\partial^i_j -\partial^i_{j-1})^2 
- n^2 \sum_{i=1}^d \sum_{j \in \mathbb{T}_n} 
(\overline{W}^i_{j-1}- \overline{W}^i_j)\partial^i_{j-1} . 
\end{equation*}
Furthermore, let $S_n = (L_n + L^*_n)/2$ and $A_n = (L_n-L^*_n)/2$ be symmetric and anti-symmetric part of $L_n$, which are represented as  
\begin{equation*}
S_n = 
\frac{n^2}{2} \sum_{i=1}^d \sum_{j \in \mathbb{T}_n} 
(\partial^i_j -\partial^i_{j-1})^2 
+ \frac{n^2}{2} \sum_{i=1}^d \sum_{j \in \mathbb{T}_n} 
\overline{W}^i_j (\partial^i_{j+1} + \partial^i_{j-1} - 2 \partial^i_j ) ,
\end{equation*}
and 
\begin{equation*}
A_n = \frac{n^2}{2} \sum_{i=1}^d \sum_{j \in \mathbb{T}_n} 
\overline{W}^i_j (\partial^i_{j+1} - \partial^i_{j-1} ) ,
\end{equation*}
respectively. 

%To clarify the reason for the definition of the fluctuation field \eqref{eq:diffusion_fluctuation_modified} is needed in general, except for the case where the framing conditions in Theorem \ref{thm:diffusion_sbe_derivation_main} are satisfied, we consider a natural fluctuation defined by \eqref{eq:diffusion_fluctuation_ori} for a while. 
In the sequel, we simply write $\mathcal{X}^n_t(\varphi)=\mathcal{X}^n_t(\varphi;f_n)$ with a generic framing $f_n$.
%Moreover, we set $f_n=0$ for simplicity.
According to Dynkin's martingale formula,  
\begin{equation}
\label{eq:martingale}
\mathcal{M}^{n, i}_t (\varphi ) 
= \mathcal{X}^{n,i}_t (\varphi ) - \mathcal{X}^{n,i}_0 ( \varphi ) 
- \int_0^t (\partial_s + L_n) \mathcal{X}^{n,i}_s (\varphi ) ds ,    
\end{equation}
is a martingale with quadratic variation 
\begin{equation}
\label{eq:qv}
\begin{aligned}
\langle \mathcal{M}^{n,i}_t(\varphi) \rangle_t 
& = \int_0^t \big(L_n (\mathcal{X}^{n,i}_s (\varphi))^2 - 2 \mathcal{X}^{n,i}_s (\varphi) L_n \mathcal{X}^{n,i}_s (\varphi)) \big) ds  \\
& = n \int_0^t \sum_{ j \in\mathbb{T}_n} T^-_{f_ns}  (\varphi^n_{j} - \varphi^n_{j-1} )^2 ds .
\end{aligned}
\end{equation}
Here we defined the shift operator $T^-$ by $T^-_{v}\varphi_j=\varphi_{j-v}$ for each $v\in\mathbb{R}$. 
Note that the above quantity \eqref{eq:qv} converges to $t \| \partial_x \varphi  \|^2_{L^2(\mathbb{R})}$ as $n\to \infty$, which yields, as we will see, the martingale part $\mathcal{M}^n_t$ converges in distribution to the space-time white-noise. 
Next, we investigate asymptotic behavior of the third term of the utmost right-hand side of \eqref{eq:martingale}.
We begin with the symmetric part 
\begin{equation}
\label{eq:diffusion_symmetric_part}
\mathcal{S}^{n,i}_t (\varphi ) 
= \int_0^t S_n \mathcal{X}^{n,i}_s (\varphi ) ds 
= \frac{1}{2 \sqrt{n} } \int_0^t \sum_{j\in \mathbb{T}_n} \overline{W}^i_j \Delta^n \varphi^n_j (s ) ds .
\end{equation}
To see that the symmetric part \eqref{eq:diffusion_symmetric_part} is a functional of the fluctuation field \eqref{eq:diffusion_fluctuation_ori}, we make use of the Taylor expansion for the nonlinear potential. 
Recall the definition of $\gamma^i_{i_1 i_2}$ and $\delta^i_{i_1 i_2 i_3}$ given in \eqref{eq:omit_derivatives}.
%Then by Taylor's theorem, for each $u\in\mathbb{R}$, there exists $\delta=\delta(u)\in (0,1)$ such that 
%\begin{equation*}
%V(u)=\sum_{|\alpha|=k} \frac{1}{k!} D^\alpha V(0) u^\alpha + \sum_{|\alpha|=k+1} \frac{1}{k!} D^\alpha V(\delta u) u^\alpha.  
%\end{equation*}
Then, we can expand $W^i_j =\partial_i V_\beta (u_j)$ in terms of $u^i_j$ as follows.  
\begin{equation}
\label{eq:diffusion_taylor_w}
W^i_j
= u^i_j 
+ \frac{1}{\sqrt{n}} \gamma^i_{i_1 i_2} u^{i_1}_j u^{i_2}_j 
+ \frac{1}{n} \delta^i_{i_1 i_2 i_3} u^{i_1}_j u^{i_2}_j u^{i_3}_j
+\varepsilon_j,
\end{equation}
where $\varepsilon_j$ is a reminder term of Taylor's theorem, which satisfies $| \varepsilon_j| \le Cn^{-3/2}e^{2\gamma_V| u_j|}$. 
This bound follows from the exponential bound of $V$, see Assumption \ref{ass:diffusion_potential}. 
With the help of this expansion, we can replace $W_j$ in \eqref{eq:diffusion_symmetric_part} by $u_j$ whose cost is estimated as follows. 
Write the expansion \eqref{eq:diffusion_symmetric_part} as $W_j=u_j+ \tilde{\varepsilon}_j$ where $\tilde{\varepsilon}_j$ is a reminder term which satisfies $|\tilde{\varepsilon}_j| \le C n^{-1/2}e^{2\gamma_V|u_j|}$. 
Then, by centering variables and then using Schwarz's inequality, the cost of the replacement is estimated as 
\begin{equation*}
\mathbb{E}_n \bigg[\sup_{0\le t\le T}\bigg| \frac{1}{2\sqrt{n}} \int_0^t \sum_{j\in\mathbb{T}_n} \tilde{\varepsilon_j}(s) \Delta^n \varphi^n_j(s) ds \bigg|^2 \bigg]
\le C \frac{T^2}{n} \| \partial^2_x \varphi\|^2_{L^2(\mathbb{T})}, 
\end{equation*}
where we used the uniform exponential moment bound Lemma \ref{lem:diffusion_uniform_moment_bound}.
As a result, it is expected that the symmetric part $\mathcal{S}^{n,i}_t(\varphi)$ is asymptotically close to $(1/2) \mathcal{X}^{n,i}_t (\partial_x^2 \varphi)$ as $n \to \infty$, noting that $\sum_{j} \Delta^n \varphi^n_j = 0$.
%Such a treatment of constant terms will be repeatedly used in the sequel. 

Next, we investigate the anti-symmetric part
\begin{equation}
\label{eq:antisymmetric}
\begin{aligned}
%\mathcal{B}^{n,i}_t (\varphi ) 
\int_0^t (\partial_s + A_n ) \mathcal{X}^{n,i}_s (\varphi ) ds 
&= \frac{1}{\sqrt{n}} \int_0^t \sum_{j \in \mathbb{T}_n } 
\big[ n \overline{W}^i_j \nabla^n \varphi^n_j(s) - \frac{f_n}{n} \overline{u}^{n,i}_j(s) \partial_x \varphi^n_j (s ) \big] ds \\
&= \sqrt{n} \int_0^t \sum_{j \in \mathbb{T}_n } 
\big[ \overline{W}^i_j(s) - \frac{f_n}{n^2} \overline{u}^{n,i}_j(s) \big] \nabla^n \varphi^n_j(s) ds +o_n(1),
\end{aligned}
\end{equation}
where recall that we set $\overline{u}^{n,i}_j = u^{n,i}_j - E_{\nu_n}[u^{n,i}_j]$ and note that replacement of continuous derivatives of $\varphi$ by discrete ones is possible. 
Indeed, note that $\nabla^n \varphi^n_j(t)-\partial_x \varphi^n_j(t) = O(n^{-2})$ holds by the mean value theorem.
Thus, by Schwarz's inequality, we have that
\begin{equation}
\label{eq:derivative_replacement}
\begin{aligned}
& \mathbb{E}_n \bigg[ \sup_{0\le t \le T} \bigg| 
\sqrt{n} \int_0^t \sum_{j \in \mathbb{T}_n } 
\overline{u}^{n,i}_j(s) (\nabla^n \varphi^n_j(s) - \partial_x \varphi^n_j (s))ds \bigg|^2\bigg] \\
& \quad \le C\frac{T}{n^3} \int_0^T \sum_{j\in \mathbb{T}_n} \mathbb{E}_n [\overline{u}^{n,i}_j(t)^2] 
\le C \frac{T^2}{n^2} ,
\end{aligned}
\end{equation}
for some $C>0$. 
\if0
Hence, the replacement of derivatives can be conducted and we obtain 
\begin{equation*}
\int_0^t (\partial_s + A_n ) \mathcal{X}^{n,i}_s (\varphi) ds 
= \sqrt{n} \int_0^t \sum_{j \in \mathbb{T}_n } 
( W^i_j - u^{n,i}_j )
\nabla^n \varphi^n_j (s) ds 
+ E^n_t. 
\end{equation*}
Here, $E^n_t$ denotes a small factor which vanishes as $n\to \infty$ in the following sense.
\begin{equation*}
\begin{aligned}
\lim_{n\to\infty} \mathbb{E}_n \bigg[ \sup_{0\le t\le T}\big|
E^n_t \big|^2 \bigg]
= 0 .
\end{aligned}
\end{equation*}
\fi
To proceed, again we use the Taylor expansion \eqref{eq:diffusion_taylor_w}. 
In fact, we can show that the difference $W^i_j -u^i_j$ is expanded in terms of $W^i_j$ as follows. 
\begin{equation}
\label{eq:w-u_sketch}
\begin{aligned}
W^i_j - u^i_j 
= \frac{1}{\sqrt{n}} \gamma^i_{i_1 i_2} \overline{W}^{i_1}_{j-1} \overline{W}^{i_2}_j 
+ \frac{1}{\sqrt{n}} \Lambda^i_k u^{k}_j
+ \frac{1}{n} \Xi^i_k u^k_j 
+ R_j ,
\end{aligned}
\end{equation}
where recall that the matrices $\Lambda$ and $\Xi$ are given by \eqref{eq:matrix_lambda} and \eqref{eq:xi_matrix}, respectively, and $R_j$ denotes a factor which does not affect the limit. 
The proof of the expansion \eqref{eq:w-u_sketch} will be given in Section \ref{sec:diffusion_taylor_expansion}. 
We define fluctuation field $\mathcal{B}^n_\cdot = (\mathcal{B}^{n,1}_\cdot , \ldots, \mathcal{B}^{n,d}_\cdot )$ associated with the first term of \eqref{eq:w-u_sketch} by
\begin{equation*}
\mathcal{B}^{n,i}_t (\varphi)
= \int_0^t \sum_{j \in \mathbb{T}_n} \gamma^i_{i_1 i_2} 
\overline{W}^{i_1}_{j-1}(s) \overline{W}^{i_2}_j(s) \nabla^n \varphi^n_j (s) ds ,
\end{equation*}
for each $\varphi \in \mathcal{S}(\mathbb{T})$, which brings us the non-linear term of SBE in a similar way as \cite{jara2020stationary}. 
Now, we take $f_n=n^2$ to make use of the Taylor expansion \eqref{eq:w-u_sketch}. 
Then, we obtain the decomposition
\begin{equation}
\label{eq:diffusion_decomposition_original}
\begin{aligned}
\mathcal{X}^{n,i}_t(\varphi)
& = \mathcal{X}^{n,i}_0(\varphi)
+ \mathcal{S}^{n,i}_t(\varphi)
+ \mathcal{B}^{n,i}_t(\varphi)
+ \sqrt{n} \int_0^t \Lambda^i_k \mathcal{X}^{n,k}_s(\partial_x \varphi) ds 
+ \int_0^t \Xi^i_k \mathcal{X}^{n,k}_s(\partial_x \varphi) ds \\
& \quad + \mathcal{M}^{n,i}_t(\varphi)
+ \mathcal{R}^{n}_t(\varphi) ,
\end{aligned}
\end{equation}
where $\mathcal{R}^n_\cdot$ denotes a reminder term which does not affect the limit. 
In what follows, we use the same notation for such a reminder term with abuse of notation. 
Looking the decomposition \eqref{eq:diffusion_decomposition_original}, however, the fourth term in the utmost right-hand side diverges as $n \to \infty$. 
It is impossible to cancel this singular term for any choice of the moving frame $f_n$, except for the case the matrix $\Lambda$ is diagonal.
On the other hand, assume now that the conditions \eqref{eq:diffusion_framing_condition_eta} and \eqref{eq:diffusion_framing_condition_xi} are valid.
Then, taking the moving frame $f_n$ as in Theorem \ref{thm:diffusion_sbe_derivation_main}, the linear terms are canceled and the above martingale decomposition becomes 
\begin{equation}
\label{eq:diffusion_decomposition_diagonal_case}
\begin{aligned}
\mathcal{X}^{n,i}_t(\varphi)
& = \mathcal{X}^{n,i}_0(\varphi)
+ \mathcal{S}^{n,i}_t(\varphi)
+ \mathcal{B}^{n,i}_t(\varphi)
%+ \sqrt{n} \int_0^t \Lambda^i_k \mathcal{Y}^{n,k}_s(\partial_x \varphi) ds 
%+ \int_0^t \Xi^i_k \mathcal{Y}^{n,k}_s(\partial_x \varphi) ds \\
+ \mathcal{M}^{n,i}_t(\varphi)
+ \mathcal{R}^{n}_t(\varphi) .
\end{aligned}
\end{equation}
In forthcoming sections, we will show tightness of each term of the martingale decomposition \eqref{eq:diffusion_decomposition_diagonal_case} and then identify the limit points.

\if0
Now, we consider the case the framing conditions \eqref{eq:diffusion_framing_condition_eta} and \eqref{eq:diffusion_framing_condition_xi} do not hold.
Instead of considering the original fields, we consider Dynkin's martingale associated with the modified fluctuation field $\mathcal{Y}^n$ which is defined by \eqref{eq:diffusion_fluctuation_modified}:
\begin{equation*}
\tilde{\mathcal{M}}^{n,i}_t (\varphi)
= \mathcal{Y}^{n,i}_t (\varphi) 
- \mathcal{Y}^{n,i}_0 (\varphi) 
- \int_0^t (\partial_s + L_n) \mathcal{Y}^{n,i}_t (\varphi) ds . 
\end{equation*}
where $\tilde{\mathcal{M}}^{n,i}$ is a martingale. 
\if0
Then, its quadratic variation can be calculated as 
\begin{equation}
%\label{eq:qv_modified}
\begin{aligned}
\langle \tilde{\mathcal{M}}^{n,i} (\varphi)\rangle_t
= n \int_0^t \sum_{i=1}^d \sum_{j \in \mathbb{T}_n}
\bigg( \sum_{i_1=1}^d Q^{i}_{i_1} (Q^{-1})^{i_1}_i 
(\varphi^{n,i_1}_{j} - \varphi^{n,i_1}_{j-1})(s) \bigg)^2 ds ,
\end{aligned}
\end{equation}
where we set
\begin{equation*}
\varphi^{n,i}_j (t) = \varphi \left( {\textstyle \frac{j - (n^2+n^{3/2}\eta^{i}) t}{n} } \right).
\end{equation*}
Similarly to the case of \eqref{eq:qv}, we have that $n\sum_{j} (\varphi^{n,i}_j(t) - \varphi^{n,i}_{j-1}(t))^2$ converges to $\| \partial_x \varphi \|^2_{L^2(\mathbb{R})}$, which does not depend on $i$. 
Therefore, noting that $\sum_k Q^i_k (Q^{-1})^k_j=\delta^i_j$, the quadratic variation \eqref{eq:qv_modified} converges to $t \| \partial_x \varphi\|^2_{L^2(\mathbb{T})}$. 
\fi
Then, it is straightforward from the definition that 
\begin{equation}
\begin{aligned}
\label{eq:symmetric_modified}
\tilde{\mathcal{S}}^{n,i}_t (\varphi)
&= \int_0^t \sum_{j \in \mathbb{T}_n } S_n \mathcal{Y}^{n,i}_s (\varphi) ds \\
&= \frac{1}{2\sqrt{n}} \int_0^t 
\sum_{j\in \mathbb{T}_n} 
Q^i_{i_1} (Q^{-1})^{i_1}_{i_2} \overline{W}^{i_2}_j(s)
(\Delta^n \varphi^{n,i_1}_j) (s) ds.
\end{aligned}
\end{equation}
Then, due to a similar argument as we conducted for \eqref{eq:diffusion_symmetric_part}, the difference between \eqref{eq:symmetric_modified} and $\mathcal{Y}^{n,i}_t(\partial_x^2\varphi)$ is small as $n \to \infty$. 

On the other hand, we deal with the anti-symmetric part. 
Similarly to the treatment of \eqref{eq:antisymmetric}, we have that
\begin{equation*}
\begin{aligned}
&\int_0^t (\partial_s + A_n ) \mathcal{Y}^{n,i}_s (\varphi ) ds \\
&\quad = \frac{1}{\sqrt{n}} \int_0^t \sum_{j \in \mathbb{T}_n } 
\big[ n Q^{i}_{i_1} (Q^{-1})^{i_1}_{i_2} \overline{W}^{i_2}_j \nabla^n \varphi^{n,i_1}_j
- Q^{i}_{i_1} (Q^{-1})^{i_1}_{i_2} (n + \sqrt{n} \eta^{i_1}) \overline{u}^{n,i_2}_j \partial_x \varphi^{n,i_1}_j \big](s) ds \\
&\quad = \sqrt{n} \int_0^t \sum_{j \in \mathbb{T}_n } 
Q^{i}_{i_1} (Q^{-1})^{i_1}_{i_2}  
\big( W^{i_2}_j - u^{n,i_2}_j - \frac{1}{\sqrt{n}} 
\eta^{i_1} u^{n,i_2}_j \big)(s) 
\nabla^n \varphi^{n,i_1}_j(s) ds 
+ o_n(1), 
\end{aligned}
\end{equation*}
where the replacement of derivatives \eqref{eq:derivative_replacement} is used in the last line. 
Then, as a consequence of the expansion \eqref{eq:w-u_sketch}, the divergent factor $(1/\sqrt{n})\Lambda u$ is canceled since $\Lambda = Q \mathrm{diag}[\eta^1,\ldots, \eta^d] Q^{-1}$ so that $(Q^{-1})^{i_1}_{k} \Lambda^{k}_{i_2} = \eta^{i_1} (Q^{-1})^{i_1}_{i_2}$ for each $(i_1,i_2)$-component. 
As a consequence, we obtain
\begin{equation*}
%\label{eq:antisymmetric_modified}
\begin{aligned}
\int_0^t (\partial_s + A_n) \mathcal{Y}^{n,i}_s(\varphi) ds
= \tilde{\mathcal{B}}^{n,i}_t(\varphi)
+ \mathcal{C}^{n,i}_t (\varphi)
+ o_n(1),
\end{aligned}
\end{equation*}
where
\begin{equation*}
\tilde{\mathcal{B}}^{n,i}_t(\varphi)
= \int_0^t \sum_{j\in \mathbb{T}_n}
Q^{i}_{i_1}(Q^{-1})^{i_1}_{i_2} \gamma^{i_2}_{i_3i_4} \overline{W}^{i_3}_{j-1} \overline{W}^{i_4}_{j} (s)
\nabla^n \varphi^{n,i_2}_j(s) ds ,
\end{equation*}
and 
\begin{equation*}
\mathcal{C}^{n,i}_t(\varphi)
= \frac{1}{\sqrt{n}} \int_0^t \sum_{j\in \mathbb{T}_n}
Q^{i}_{i_1}(Q^{-1})^{i_1}_{i_2} \Xi^{i_2}_{i_3} \overline{u}^{i_3}_j (s)
\nabla^n \varphi^{n,i_2}_j(s) ds .
\end{equation*}
In summary, we obtain another decomposition 
\begin{equation*}
\begin{aligned}
\mathcal{Y}^{n,i}_t(\varphi)
 = \mathcal{Y}^{n,i}_0(\varphi)
+ \tilde{\mathcal{S}}^{n,i}_t(\varphi)
+ \tilde{\mathcal{B}}^{n,i}_t(\varphi)
+ \mathcal{C}^{n,i}_t(\varphi)
+ \tilde{\mathcal{M}}^{n,i}_t(\varphi)
+ \mathcal{R}^{n}_t(\varphi) ,
\end{aligned}
\end{equation*}
where we used the same symbol for the remainder term as an abuse of notation.  
Comparing to \eqref{eq:diffusion_decomposition_original}, the singular term is canceled.
However, the situation where the moving frames are different for each species is problematic and we are not sure how to characterize limit points of the modified field \eqref{eq:diffusion_fluctuation_modified}.
In what follows, we always impose the framing conditions \eqref{eq:diffusion_framing_condition_eta} and \eqref{eq:diffusion_framing_condition_xi}. 
We show tightness of each term in \eqref{eq:diffusion_decomposition_diagonal_case}, which will be shown in Section \ref{sec:diffusion_tightness}.
Then, in Section \ref{sec:identification}, an identification of limit points will be given, imposing 
\fi

%%%%%%%%%%%%%%%%%%%%%%%%%%%%%%%%
%    2nd order BG principle    %
%%%%%%%%%%%%%%%%%%%%%%%%%%%%%%%%
\section{The Second-order Boltzmann-Gibbs Principle}
\label{sec:BG}
We give some preliminaries in this section.  
In particular, we show the second-order Boltzmann-Gibbs principle, which is a key ingredient to show the main theorem. 
Let $\mathcal{C} = C^2(\mathbb{R}^{d \times n})$. 
It is easy to verify that the Dirichlet form, associated with our Markov process, is given by   
\begin{equation}
\label{eq:H1norm}
\| f \|^2_{1,n} = \langle f, - L_n f \rangle_{L^2(\nu_n)}
= \frac{n^2}{2} \sum_{i=1}^d \sum_{j \in \mathbb{T}_n} 
E_{\nu_n} \big[ \big( (\partial^i_{j+1} - \partial^i_{j}) f \big)^2 \big] . 
\end{equation}
(See \cite{jara2020stationary} or \cite{diehl2017kardar} for a detailed computation.)
Moreover, we define the following norm as a dual of $\| \cdot \|_{1,n}$.  
\begin{equation}
\label{eq:H-1norm}
\|f \|^2_{-1,n} = \sup_{f \in \mathcal{C}} 
\big\{ 2 \langle f,g \rangle_{L^2(\nu_n)} - \| g \|^2_{1,n} \big\} .  
\end{equation}
Then, recall that the following Kipnis-Varadhan inequality holds for any mean-zero function $F:[0,T] \to L^2(\nu_n)$. 
\begin{equation*}
\mathbb{E}_n \bigg[ \sup_{0 \le t \le T} 
\bigg| \int_0^T F(s, u^n(s)) ds \bigg|^2 \bigg]
\le C \int_0^T \| F(s, \cdot) \|_{-1,n } ds . 
\end{equation*}
In addition, the following integration-by-parts formula is available. 

\begin{lemma}[Integration by parts]
\label{lem:ibp}
For each $F \in \mathcal{C} $, we have that 
\begin{equation*}
E_{\nu_n} [ \overline{W}^i_j F ] = E_{\nu_n} [\partial^i_j F ]  .
\end{equation*}
\end{lemma}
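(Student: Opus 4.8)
The plan is to integrate by parts directly against the explicit product density of $\nu_n$. Write this density as
\[
\rho_n(u) = \frac{1}{Z_{\beta,\lambda}^n} \prod_{k\in\mathbb{T}_n} \exp\big(-V_\beta(u_k) + \lambda\cdot u_k\big),
\]
so that $\nu_n(du) = \rho_n(u)\,du$. The computational heart of the lemma is the observation that only the factor indexed by $k=j$ depends on the coordinate $u^i_j$, so differentiating the log-density gives
\[
\partial^i_j \log \rho_n(u) = \partial^i_j\big(-V_\beta(u_j) + \lambda\cdot u_j\big) = -\partial_i V_\beta(u_j) + \lambda^i = -\overline{W}^i_j,
\]
where I used $W^i_j = \partial_i V_\beta(u_j)$ and $\overline{W}^i_j = W^i_j - \lambda^i$. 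Equivalently, $\partial^i_j \rho_n = -\overline{W}^i_j\,\rho_n$.

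First I would write $E_{\nu_n}[\partial^i_j F] = \int_{\mathscr{X}} (\partial^i_j F)(u)\,\rho_n(u)\,du$ and integrate by parts in the single variable $u^i_j$, holding all other coordinates fixed. Using the identity above this yields
\[
E_{\nu_n}[\partial^i_j F] = -\int_{\mathscr{X}} F\,\partial^i_j \rho_n\,du = \int_{\mathscr{X}} F\,\overline{W}^i_j\,\rho_n\,du = E_{\nu_n}[\overline{W}^i_j F],
\]
provided the boundary contributions from the one-dimensional integration by parts vanish, which would complete the proof.

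The hard part, then, is purely the justification that the boundary terms at $u^i_j\to\pm\infty$ vanish and that all the integrals converge absolutely. I would control this with the two ingredients already available. On the one hand, the convexity argument in the proof of Lemma \ref{lem:diffusion_uniform_moment_bound} gives, for sufficiently small $\beta$, the linear lower bound $V_\beta(u)\ge 2\sum_{1\le i\le d}|\lambda^i u^i| - C$, so that $\rho_n$ decays at least exponentially in each coordinate. On the other hand, the at-most-exponential growth of $V$ and its derivatives from Assumption \ref{ass:diffusion_potential}, combined with the uniform exponential moment bound of Lemma \ref{lem:diffusion_uniform_moment_bound}, ensures that the functions $F$ relevant to the applications grow no faster than $e^{\gamma|u|}$ for some $\gamma$; hence $F\rho_n$ and $\overline{W}^i_j F\rho_n$ are integrable and the boundary terms decay to zero. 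This decay being the only subtlety beyond the elementary log-derivative computation, the lemma follows.
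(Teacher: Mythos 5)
Your proposal is correct and follows essentially the same route as the paper: a direct integration by parts in the single coordinate $u^i_j$ against the explicit product density of $\nu_n$, using $\partial^i_j \rho_n = -\overline{W}^i_j \rho_n$. The only difference is that you spell out the vanishing of the boundary terms via the linear lower bound on $V_\beta$ and the exponential moment bound, a point the paper subsumes under ``a usual integration-by-parts formula.''
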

\begin{proof}
By a usual integration-by-parts formula, we have that
\begin{equation*}
\begin{aligned}
E_{\nu_n} [\partial_j^i F ] 
& = \frac{1}{Z_{\beta,\lambda}} \int_{ \mathbb{R}^{d\times n}}
\partial^i_j F(u) \exp \bigg( - \sum_{j=1}^n (V_{\beta} (u_j) - \lambda \cdot u_j) \bigg) du \\
& = \frac{1}{Z_{\beta,\lambda}} \int_{ \mathbb{R}^{d\times n}}
F(u) (\partial_i V_\beta (u_j) - \lambda^i) \exp \bigg( - \sum_{j=1}^n (V_{\beta} (u_j) - \lambda \cdot u_j) \bigg) du 
= E_{\nu_n} [\overline{W}^i_j F ] .
\end{aligned}
\end{equation*}
\end{proof}
Similarly to the proof of Lemma \ref{lem:ibp}, we have for any $F,G\in \mathcal{C}$ that 
\begin{equation*}
E_{\nu_n} [ G \partial^i_j F ] = - E_{\nu_n} [ F \partial^i_j G ] + E_{\nu_n} [\overline{W}^i_j F G ]  .
\end{equation*}
This immediately implies $(\partial^i_j)^* = - \partial^i_j + \overline{W}^i_j $. 
Let $\overrightarrow{(W^i)}^\ell_j 
= \ell^{-1} \sum_{k=0}^{\ell-1} \overline{W}^i_{j+k}$ be a local average of $\overline{W}^i_j$.
Then, we have the following estimate.

\begin{theorem}[The second-order Boltzmann-Gibbs principle]
\label{thm:2BG}
For each $i_1,i_2 \in \{ 1, \ldots, d \}$, we have that
\begin{equation*}
\begin{aligned}
& \mathbb{E}_n \bigg[ \sup_{0 \le t \le T} \bigg| 
\int_0^t \sum_{j \in \mathbb{T}_n} 
\big( \overline{W}^{i_1}_{j-1} \overline{W}^{i_2}_{j} 
- \overrightarrow{(W^{i_1})}^\ell_j 
\overrightarrow{(W^{i_2})}^\ell_j \big)(s)
\varphi^n_j (s) ds \bigg|^2 \bigg] \\
&\quad \le C \bigg( \frac{\ell}{n^2} + \frac{T}{\ell^2} \bigg) \int_0^T \sum_{j \in \mathbb{T}_n } 
\varphi^n_j (t)^2 dt .
\end{aligned}
\end{equation*}
\end{theorem}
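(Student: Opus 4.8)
The plan is to turn the dynamical (time-integrated) bound into a static resolvent estimate through the Kipnis-Varadhan inequality, and then to control the resulting $\|\cdot\|_{-1,n}$ norm by the variational formula \eqref{eq:H-1norm} together with the integration by parts of Lemma \ref{lem:ibp}. Writing $V_j = \overline{W}^{i_1}_{j-1}\overline{W}^{i_2}_j - \overrightarrow{(W^{i_1})}^\ell_j\overrightarrow{(W^{i_2})}^\ell_j$ and $F(s,\cdot) = \sum_{j\in\mathbb{T}_n}\varphi^n_j(s)V_j$, the Kipnis-Varadhan inequality reduces the problem to a time-integrated estimate of $\|F(s,\cdot)\|_{-1,n}^2$ once $F$ is centered. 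The $\nu_n$-mean of $V_j$ equals $-c_{i_1i_2}/\ell$ with $c_{i_1i_2}=E_{\nu_n}[\overline{W}^{i_1}_0\overline{W}^{i_2}_0]$, so it is constant in $j$ and the centering correction is $-\tfrac{c_{i_1i_2}}{\ell}\int_0^t\sum_j\varphi^n_j(s)\,ds$, which vanishes in the applications where $\varphi^n_j$ is a discrete gradient. The structural fact I would lean on throughout is that under the product measure $\nu_n$ the vectors $W_j=\nabla V_\beta(u_j)$ are independent across $j$, so that $\partial^i_m\overline{W}^{i'}_{m'}=0$ unless $m=m'$; this makes all cross terms produced by Lemma \ref{lem:ibp} collapse.

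Rather than replacing the microscopic product by its scale-$\ell$ average in one shot, I would telescope over dyadic scales $L=2^0,\dots,2^K=\ell$, passing from $\overrightarrow{(W^{i_1})}^L_j\overrightarrow{(W^{i_2})}^L_j$ to $\overrightarrow{(W^{i_1})}^{2L}_j\overrightarrow{(W^{i_2})}^{2L}_j$ at each step. The advantage is that at every step both factors live at comparable scales, so I never pair a single-site factor of size $O(1)$ against a telescope of length $\ell$ — which is exactly what makes a direct replacement lossy. Each doubling increment is decomposed as
\[
\overrightarrow{(W^{i_1})}^L\overrightarrow{(W^{i_2})}^L-\overrightarrow{(W^{i_1})}^{2L}\overrightarrow{(W^{i_2})}^{2L}=\overrightarrow{(W^{i_1})}^L\big(\overrightarrow{(W^{i_2})}^L-\overrightarrow{(W^{i_2})}^{2L}\big)+\big(\overrightarrow{(W^{i_1})}^L-\overrightarrow{(W^{i_1})}^{2L}\big)\overrightarrow{(W^{i_2})}^{2L},
\]
i.e. as a product of a block average, whose $L^2(\nu_n)$ size is $O(L^{-1/2})$ by independence, and a difference of two block averages.

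For a fixed $g$ in the variational formula I would then use Lemma \ref{lem:ibp} to convert one centered factor of $F$ into a derivative $\partial^i_m g$, and expand the difference of block averages as a weighted telescoping sum of the discrete gradients $(\partial^i_m-\partial^i_{m-1})g$ that constitute the Dirichlet form \eqref{eq:H1norm}. A Cauchy-Schwarz and Young estimate pairing these gradients against $\|g\|_{1,n}^2$, using both the $n^{-2}$ weight in \eqref{eq:H1norm} and the $O(L^{-1/2})$ smallness of the surviving block-average factor, gives a per-step resolvent bound of order $Ln^{-2}\sum_j(\varphi^n_j)^2$; summing the geometric series over $L\le\ell$ yields the dynamic contribution $\ell n^{-2}\sum_j(\varphi^n_j)^2$. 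The second error term has a different origin: it is the discrepancy between the conditional expectation of $\overline{W}^{i_1}_{j-1}\overline{W}^{i_2}_j$ given the block $\sigma$-field and the product of averages $\overrightarrow{(W^{i_1})}^\ell_j\overrightarrow{(W^{i_2})}^\ell_j$. By independence and a Taylor expansion of $\partial^2 V_\beta$ this discrepancy is $O(1/\ell)$ in $L^2(\nu_n)$, an equivalence-of-ensembles estimate which, after integrating in time, produces the static term $T\ell^{-2}\sum_j(\varphi^n_j)^2$.

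The step I expect to be the main obstacle is securing the sharp rate $\ell/n^2$ in place of the $\ell^2/n^2$ that a one-shot replacement produces: because the object is quadratic, after a single integration by parts one factor is left un-averaged, and only the dyadic organization together with the $O(L^{-1/2})$ gain from the block averages recovers the correct power. A secondary but delicate point is the bookkeeping of the diagonal terms generated by Lemma \ref{lem:ibp} when two block indices coincide; these carry the second-derivative factors $\partial^2_{i_1 i_2}V_\beta$ and feed precisely into the $O(1/\ell)$ equivalence-of-ensembles remainder. Throughout, all $L^2(\nu_n)$-moments of $W_j$ and its derivatives that appear are controlled uniformly in $n$ by the exponential growth bound in Assumption \ref{ass:diffusion_potential} and the uniform moment estimate of Lemma \ref{lem:diffusion_uniform_moment_bound}.
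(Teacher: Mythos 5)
Your analytic skeleton (Kipnis--Varadhan, the variational formula \eqref{eq:H-1norm}, the integration by parts of Lemma \ref{lem:ibp}, and the product structure of $\nu_n$) is the same as the paper's, but your core decomposition is genuinely different, and your reason for choosing it rests on a misconception. The paper does the replacement in one shot: starting from \eqref{eq:oneblock_decomposition}, it telescopes $\overline{W}^{i_2}_j-\overrightarrow{(W^{i_2})}^\ell_j$ into single-site gradients $W^{i_2}_m-W^{i_2}_{m+1}$, collects the other factor into $F^{i_1}_j=\sum_{k=0}^{\ell-2}\varphi_{j-k}\overline{W}^{i_1}_{j-1-k}\psi_k$, and integrates by parts; the derivatives $\partial^{i_2}_j-\partial^{i_2}_{j+1}$ never hit $F^{i_1}_j$, and --- this is the point you miss --- since the $\overline{W}^{i_1}_{j-1-k}$ are independent and centered under the product measure, $E_{\nu_n}\big[(F^{i_1}_j)^2\big]\le C\sum_{k}\varphi_{j-k}^2$, so $\sum_j E_{\nu_n}[(F^{i_1}_j)^2]\le C\ell\sum_j\varphi_j^2$ and the one-shot bound is already $\ell/n^2$. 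The square-root cancellation you plan to extract scale by scale is thus available globally, directly from the structural independence you yourself state at the outset; your stated ``main obstacle'' (that a one-shot replacement is lossy, of order $\ell^2/n^2$) does not occur in this model, and the dyadic Gon\c{c}alves--Jara iteration, while workable in outline, is unnecessary overhead here. It also creates work the paper avoids: in your doubling steps the two block factors overlap, so the same-site terms $\partial^{i}_m\overline{W}^{i'}_m=\partial^2_{ii'}V_\beta(u_m)$ do arise and require the bookkeeping you defer, whereas in the paper's first term the supports are disjoint by construction.

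The genuine gap is your account of the $T/\ell^2$ term. You attribute it to an ``equivalence-of-ensembles'' discrepancy between a conditional expectation given a block $\sigma$-field and the product of averages, but $\nu_n$ is a product measure and nothing is conditioned on a block-conserved quantity, so there is no ensemble comparison in play. The actual source (in the off-diagonal case, which is the only case the paper proves directly, deferring $i_1=i_2$ to \cite[Proposition 1]{jara2020stationary}) is exactly the mean you compute and then dismiss: $E_{\nu_n}\big[\overrightarrow{(W^{i_1})}^\ell_j\overrightarrow{(W^{i_2})}^\ell_j\big]=c_{i_1i_2}/\ell$ with $c_{i_1i_2}=E_{\nu_n}[\partial^2_{i_1i_2}V_\beta(u_0)]$ by Lemma \ref{lem:ibp}. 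Kipnis--Varadhan requires mean-zero input, so this must be split off and \emph{estimated}, not waved away for gradient test functions: the theorem is stated for general $\varphi^n_j$. The estimate works precisely because $\partial^2_{i_1i_2}V(0)=0$ for $i_1\neq i_2$, whence $c_{i_1i_2}=O(\beta)=O(n^{-1/2})$, and Cauchy--Schwarz on the deterministic integral gives a contribution of order $c_{i_1i_2}^2\,Tn\,\ell^{-2}\int_0^T\sum_j\varphi_j^2\,dt\le C(T/\ell^2)\int_0^T\sum_j\varphi_j^2\,dt$ --- the second term of the statement. In the diagonal case $c_{ii}=1+O(\beta)$, this term is not small, and neither your sketch nor your gradient remark resolves it; there the renormalization at each scale (subtracting the $O(1/L)$ variance constants) must actually be carried out, which is the content of the scalar result in \cite{jara2020stationary} that both you and the paper ultimately rely on.
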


In particular, when the condition \eqref{eq:diffusion_framing_condition_eta} is imposed so that the moving framing can commonly taken for each component, this estimate enables us to replace the fluctuation field $\mathcal{B}^{n,i}_\cdot$ by local averages. 

\begin{proof}[Proof of Theorem \ref{thm:2BG}]
We hereafter omit the dependence on $n$ for simplicity. 
When $i_1=i_2$ holds, the theorem is already shown in \cite[Proposition 1]{jara2020stationary} in the O'Connell-Yor polymer case, and the extension to the general potential is straightforward. 
Hence, it is sufficient to show the estimate for $i_1 , i_2 \in \{ 1, \ldots, d \}$ such that $i_1\neq i_2$. 

Note that the following decomposition is available. 
\begin{equation}
\label{eq:oneblock_decomposition}
\begin{aligned}
 \overline{W}^{i_1}_{j-1} \overline{W}^{i_2}_j
- \overrightarrow{(W^{i_1})}^\ell_j \overrightarrow{(W^{i_2})}^\ell_j 
= \overline{W}^{i_1}_{j-1} 
[\overline{W}^{i_2}_j - \overrightarrow{(W^{i_2})}^\ell_j]
+ (\overrightarrow{W^{i_2}})^\ell_{j} 
[\overline{W}^{i_1}_{j-1} - \overrightarrow{(W^{i_1})}^\ell_j] . 
\end{aligned}
\end{equation}
We find it sufficient to show the one-block estimate, which evaluate the cost to replace $\overline{W}^i_j$ by its local average $(\overrightarrow{W}^i_j)^\ell$.  
Indeed, the first term in \eqref{eq:oneblock_decomposition} is rewritten as 
\begin{equation*}
\begin{aligned}
\sum_{j\in \mathbb{T}_n}
\overline{W}^{i_1}_{j-1} (\overline{W}^{i_2}_j - (\overrightarrow{W^{i_2}})^\ell_j) \varphi_j 
&=\sum_{j \in \mathbb{T}_n} \sum_{k=0}^{\ell-2}
\overline{W}^{i_1}_{j-1} (W^{i_2}_{j+k} - W^{i_2}_{j+k+1})\psi_{k} \varphi_j \\
&= \sum_{j\in \mathbb{T}_n } F^{i_1}_j (W^{i_2}_{j} - W^{i_2}_{j+1}),
\end{aligned}
\end{equation*}
where $\psi_k = (\ell - k - 1)/\ell$ and $F^i_j = \sum_{k=0}^{\ell-2} \varphi_{j-k} \overline{W}^{i}_{j-1-k} \psi_j$. 
Note that the functional $F^{i_1}_j$ is invariant under the operation $\partial^{i_2}_j - \partial^{i_2}_{j+1}$ when $i_1 \neq i_2$. 
Hence, for each $f \in C^2(\mathbb{R}^{d\times n} ) $, applying the integration-by-parts and Schwarz's inequality, we have
\begin{equation*}
\begin{aligned}
&\bigg\langle 2 \sum_{j \in \mathbb{T}_n} \overline{W}^{i_1}_{j-1} 
[\overline{W}^{i_2}_j - \overrightarrow{(W^{i_2})}^\ell_j], f \bigg\rangle_{L^2(\nu_n)} \\
&\quad= 2 E_{\nu_n} \bigg[ \sum_{j \in \mathbb{T}_n } F^{i_1}_j (W^{i_2}_j -W^{i_2}_{j+1}) f \bigg] 
 = 2 E_{\nu_n} \bigg[ \sum_{j \in \mathbb{T}_n } F^{i_1}_j (\partial^{i_2}_j -\partial^{i_2}_{j+1}) f \bigg] \\
&\quad \le \frac{2}{n^2} \sum_{j \in \mathbb{T}_n} E_{\nu_n} \big[ (F_j^{i_1})^2 \big]
+ \frac{n^2}{2} \sum_{j \in \mathbb{T}_n} 
E_{\nu_n} \big[ \big( (\partial^{i_2}_{j+1} - \partial^{i_2}_j ) f \big)^2 \big] .
\end{aligned}
\end{equation*}
Since the second term in the last line is bounded by $\| f\|^2_{1,n}$, the Kipnis-Varadhan inequality yields
\begin{equation*}
\begin{aligned}
& \mathbb{E}_n \bigg[ \sup_{0 \le t \le T} \bigg| \int_0^t \sum_{j \in \mathbb{T}_n} 
\overline{W}^{i_1}_{j-1}(s) 
[\overline{W}^{i_2}_j(s) - \overrightarrow{(W^{i_2})}^\ell_j(s)] 
\varphi_j (s) ds \bigg|^2 \bigg] \\
& \quad \le C \frac{1}{n^2} 
\int_0^T \sum_{i=1}^d \sum_{j \in \mathbb{T}_n} 
E_{\nu_n} [ F^{i}_j(t)^2 ] dt 
\le C \frac{\ell}{n^2} \int_0^T \sum_{j \in \mathbb{T}_n}  \varphi_j (t)^2 . 
\end{aligned}
\end{equation*}
The second term in the utmost right-hand side of \eqref{eq:oneblock_decomposition} can be treated similarly, and thus we complete the proof. 
\end{proof}

%%%%%%%%%%%%%%%%%%%
%    Tightness    %
%%%%%%%%%%%%%%%%%%%
\section{Tightness}
\label{sec:diffusion_tightness}
Recall the martingale decomposition \eqref{eq:diffusion_decomposition_diagonal_case}. 
In this section, we demonstrate that he sequences of fluctuation fields $\{\mathcal{M}^{n,i}_\cdot\}_n$, $\{\mathcal{S}^{n,i}_\cdot\}_n$ and $\{\mathcal{B}^{n,i}_\cdot\}_n$ are tight. 
The tightness of the sequence $\{\mathcal{X}^{n,i}_\cdot\}_n$ follows from the tightness of these sequences.

\subsection{Convergence at Fixed Times}
Before showing tightness of the processes in the decomposition \eqref{eq:diffusion_decomposition_diagonal_case}, we consider convergence of fluctuation fields at fixed times. 
Recalling the static estimate in Section \ref{sec:diffusion_static_estimate}, $\nu_n$ converges weakly to the normal distribution with mean $\lambda$ and unit variance, and $\lim_{n \to \infty} \mathrm{Var}_{\nu_n}[u^{n,i}_j] = 1$ for each $i=1,\ldots,d$.
Hence, we conclude that the law of $\mathcal{X}^{n,i}_t(\varphi)$ for each $t$ converges to the space white-noise with unit variance. 
%Similarly, we can show that $\lim_{n \to \infty} E_{\nu_n}[u^{n,i}_j] = \lambda^i$ for each $i=1,\ldots,d$.
Now the convergence of the fluctuation fields at fixed times follows in a similar way as \cite[Section 4.1.1]{diehl2017kardar}.

\subsection{Martingale Part}
Next, we are in a position to deal with the martingale term $\mathcal{M}^{n,i}_\cdot$. 
Note that according to Mitoma's criterion \cite{mitoma1983tightness}, the sequence $\{ \mathcal{M}^{n,i}_\cdot \}_n$ is tight in $C([0,T], \mathcal{S}^\prime (\mathbb{T}))$ if and only if the sequence $\{ \mathcal{M}^{n,i}_\cdot (\varphi) \}_n$ is tight in $C([0,T], \mathbb{R})$ for all $\varphi \in \mathcal{S}(\mathbb{T})$. 
Recall that $\tilde{\mathcal{M}}^{n,i}_t$ has quadratic variation 
\begin{equation*}
\begin{aligned}
\langle \mathcal{M}^{n,i} (\varphi)\rangle_t
= n \int_0^t \sum_{i=1}^d \sum_{j \in \mathbb{T}_n}
\big( \varphi^{n}_{j}(s) - \varphi^{n}_{j-1}(s) \big)^2
\le C t \| \partial_x \varphi \|^2_{L^2(\mathbb{T})}.
\end{aligned}
\end{equation*}
By the Burkholder-Davis-Gundy inequality, we have that
\begin{equation*}
\mathbb{E}_n \big[ | \mathcal{M}^{n,i}_{t_2} - \mathcal{M}^{n,i}_{t_1} |^p \big]
\le C |t_2- t_1|^{p/2} \| \partial_x \varphi \|^{p}_{L^2(\mathbb{T})} ,
\end{equation*}
for all $p \ge 0$. 
Hence, we complete the proof of the tightness of the martingale part. 
%(See \cite{gonccalves2023derivation} for details.)

%Hence, the condition \eqref{eq:tighness_condition_prohorov} holds and we complete the proof of the tightness of the martingale part. (See Section \ref{sec:zrp_tightness} for details.)
%tightness follows from the Kolmogorov-Chentsov criterion (Proposition \ref{prop:kolmogorov_chentsov}), by taking $p$ large enough. 

\subsection{Symmetric Part}
Recall that 
\begin{equation*}
\mathcal{S}^{n,i}_t (\varphi ) 
= \frac{1}{2\sqrt{n}} \int_0^t \sum_{j\in \mathbb{T}_n} 
%Q^{i}_{i_1} (Q^{-1})^{i_1}_{i_2} 
\overline{W}^{i}_j \Delta^n \varphi^{n}_j(s) ds .
\end{equation*}
Then, by a direct $L^2$-computation, we have a bound
\begin{equation*}
\begin{aligned}
\mathbb{E}_n \big[| \mathcal{S}^{n,i}_{t_2}
- \mathcal{S}^{n,i}_{t_1} |^2 \big]
&\le C |t_2 - t_1| n^{-1} \int_{t_1}^{t_2} \sum_{j \in \mathbb{T}_n} \mathbb{E}_n [\overline{W}^{i}_j (s)^2] (\Delta^n \varphi^n_j(s))^2 ds \\
& \le C |t_2-t_1|^2 \| \partial_x \varphi \|^2_{L^2(\mathbb{T})} ,
\end{aligned}
\end{equation*}
from which the tightness follows immediately from the Kolmogorov-Chentsov criterion.
%(Proposition \ref{prop:kolmogorov_chentsov}).  

\subsection{Anti-symmetric Part}
Now we show the tightness of the anti-symmetric part. 
%Since the tightness of the linear term $\mathcal{C}^n$ is clear from an $L^2$ computation, we may only consider the nonlinear term $\tilde{\mathcal{B}}^n$. 
%Define
%\begin{equation*}
%\mathcal{Q}^{n,i_1,i_2} (\ell,t) 
%= Q^{i_1i_2}_{i_2}(Q^{-1})^{i_2}_{k_1} \gamma^{k_1}_{k_2k_3} \overrightarrow{(W^{k_2})}_{0}(s)
%\overrightarrow{(W^{k_3})}_{0}(s),
%\end{equation*}
%for each $i_1,i_2\in\{1,\ldots,d\}$ where the Einstein convention is used only for the indices $k_1,k_2,k_3$. 
By Theorem \ref{thm:2BG} and stationarity, we have that 
\begin{equation*}
\begin{aligned}
& \mathbb{E}_n \bigg[ \bigg| 
 \mathcal{B}^{n,i}_{t_2} (\varphi) 
- \mathcal{B}^{n,i}_{t_1} (\varphi) 
- \int_{t_1}^{t_2} \sum_{j \in \mathbb{T}_n} 
%Q^{i}_{k_1}(Q^{-1})^{k_1}_{k_2} 
\gamma^{i}_{k_1k_2} 
\overrightarrow{(W^{k_1})}^\ell_j(s)
\overrightarrow{(W^{k_2})}^\ell_j(s)
\nabla^n \varphi^{n}_j (s) ds \bigg|^2 \bigg] \\
&\quad\le C \bigg( \frac{(t_2 - t_1)\ell}{n^2} + \frac{(t_2- t_1)^2}{\ell^2} \bigg) .
\end{aligned}
\end{equation*}
Here note that we omit the summation symbol in $k_1,k_2$ by Einstein's convention. 
On the other hand, after splitting the summation in $j$ into blocks with length $\ell$, an $L^2$ computation yields 
\begin{equation*}
\mathbb{E}_n \bigg[ \bigg| \int_{t_1}^{t_2} 
\sum_{j \in \mathbb{T}_n} 
%\tau_j \mathcal{Q}^{n,ik} (\ell,s) \nabla^n \varphi^{n,k}_j (s) 
%Q^{i}_{k_1}(Q^{-1})^{k_1}_{k_2} 
\gamma^{i}_{k_1k_2} 
\overrightarrow{(W^{k_1})}^\ell_j(s)
\overrightarrow{(W^{k_2})}^\ell_j(s)
\nabla^n \varphi^{n}_j (s) ds \bigg|^2 \bigg]
\le \frac{(t_2 -t_1)^2 n}{\ell} .
\end{equation*}
When $1/n^2 \le t_2-t_1 \le 1$, we take an integer $\ell\sim\sqrt{t_2 - t_1}n$ in the above two estimate to obtain 
\begin{equation*}
 \mathbb{E}_n \big[ \big| 
 \mathcal{B}^{n,i}_{t_2}(\varphi) 
 - \mathcal{B}^{n,i}_{t_1} (\varphi) \big|^2 \big] 
\le C (t_2 -t_1)^{3/2}.
\end{equation*}
Meanwhile, when $t_2 - t_1 \le 1/n^2$, we can directly estimate as
\begin{equation*}
 \mathbb{E}_n \big[ \big| 
 \mathcal{B}^{n,i}_{t_2}(\varphi) 
 - \mathcal{B}^{n,i}_{t_1} (\varphi) \big|^2 \big] 
\le C (t_2 - t_1)^2 n 
\le C (t_2 -t_1)^{3/2},
\end{equation*}
from which tightness follows by the Kolmogorov-Chentsov criterion.
%(Proposition \ref{prop:kolmogorov_chentsov}). 

%%%%%%%%%%%%%%%%%%%%%%%%
%    Identification    %
%%%%%%%%%%%%%%%%%%%%%%%%
\section{Identification of Limit Points}
\label{sec:identification}
Throughout this section, we assume the framing conditions \eqref{eq:diffusion_framing_condition_eta} and \eqref{eq:diffusion_framing_condition_xi}. 
%In this case, note that the fluctuation field $\mathcal{X}^n$ matches the natural one $\mathcal{Y}^n$, and thus $\tilde{\mathcal{S}}^n=\mathcal{S}^n$ and $\tilde{\mathcal{B}}^n=\mathcal{B}^n$ hold.
Recall the martingale decomposition \eqref{eq:diffusion_decomposition_diagonal_case} under the framing conditions.
By tightness we proved in the previous section, there exist processes $\mathcal{X}^i$, $\mathcal{S}^i$, $\mathcal{B}^i$, $\mathcal{M}^i$ and a subsequence of $n$, which is still denoted by the same notation, such that 
\begin{equation*}
\begin{aligned}
\lim_{n \to \infty} 
(\mathcal{X}^{n,i}, \mathcal{S}^{n,i}, 
\mathcal{B}^{n,i}, \mathcal{M}^{n,i})_{1\le i\le d}
= (\mathcal{X}^i ,  \mathcal{S}^i,
\mathcal{B}^i, \mathcal{M}^{i})_{1\le i\le d} ,  
\end{aligned}
\end{equation*}
in distribution. 
Hereafter we identify limit of the terms in the decomposition \eqref{eq:diffusion_decomposition_diagonal_case} by the stationary energy solution of the coupled stochastic Burgers equation \eqref{eq:diffusion_coupled_kpz_thm}.
 
First, for the symmetric part, by tightness of $\{ \mathcal{Y}^{n,i}_\cdot \}_n $, we can easily show that
\begin{equation*}
%\lim_{n \to \infty} \mathcal{C}^{n,i}_t(\varphi)= \int_0^t \Xi^i_k \mathcal{X}^{k}_s (\partial_x \varphi) ds , \quad 
\mathcal{S}^{i}_t (\varphi) 
= \frac{1}{2} \int_0^t \mathcal{X}^i_s(\partial^2_x \varphi) ds . 
\end{equation*}
Moreover, $\lim_{n \to \infty} \langle \mathcal{M}^{n,i} (\varphi) \rangle_t = t \| \partial_x \varphi \|^2_{L^2(\mathbb{T})}$ yields the convergence of the martingale part to the white-noise. 
Hence, we are left to identify limit of $\{ \mathcal{B}^{n,i}_\cdot \}_n$. 
Define a modified fluctuation field 
\begin{equation*}
\tilde{\mathcal{X}}^{n,i}_t (\varphi)
= \frac{1}{\sqrt{n}} \sum_{j \in \mathbb{T}_n } \overline{W}^i_j(t) \varphi^n_j (t) . 
\end{equation*}
Then, recalling $\iota_\varepsilon(x) = \varepsilon^{-1} \mathbf{1}_{[x, x + \varepsilon)}$, we have 
\begin{equation*}
(\overrightarrow{W^{i}})_j^{\varepsilon n}
= \frac{1}{\varepsilon n} \sum_{k \in \mathbb{T}_n} 
\overline{W}^i_k \mathbf{1}_{ \left[\frac{j-f_n t}{n}, \frac{j-f_n t}{n} + \varepsilon \right)} \big( {\textstyle \frac{k-f_n t}{n}} \big)
= \frac{1}{\sqrt{n}} 
\tilde{\mathcal{X}}^{n,i}_t \big(\iota_\varepsilon ({\textstyle \frac{j-f_n t}{n}}) \big) . 
\end{equation*}
Consequently, we have that 
\begin{equation*}
\begin{aligned}
&\sum_{j \in \mathbb{T}_n} \gamma^i_{k_1k_2} 
\overrightarrow{(W^{k_1})}^{\varepsilon n}_j(t)
\overrightarrow{(W^{k_2})}^{\varepsilon n}_j(t)
 \nabla^n \varphi^n_j(t)\\
&\quad= \frac{1}{n} \sum_{j \in \mathbb{T}_n} \gamma^i_{i_1, i_2} 
\tilde{\mathcal{X}}^{n,i_1}_t \big(\iota_\varepsilon ({\textstyle \frac{j-f_n t}{n}}) \big)
\tilde{\mathcal{X}}^{n,i_2}_t \big(\iota_\varepsilon ({\textstyle \frac{j-f_n t}{n}}) \big)
\nabla^n \varphi^n_j(t) ,
\end{aligned}
\end{equation*}
%where we set 
%\begin{equation*}
%\mathcal{Q}^{n,i}(\ell,t)
%= \gamma^i_{k_1k_2} 
%\overrightarrow{(W^{k_1})}^\ell_0(t)
%\overrightarrow{(W^{k_2})}^\ell_0 (t).
%\end{equation*}
On the other hand, let us consider another quantity 
\begin{equation*}
\begin{aligned}
&\sum_{j \in \mathbb{T}_n} \gamma^i_{k_1k_2} 
\overrightarrow{(u^{k_1})}^{\varepsilon n}_j(t)
\overrightarrow{(u^{k_2})}^{\varepsilon n}_j (t)
\nabla^n \varphi^n_j(t) \\
&\quad= \frac{1}{n} \sum_{j \in \mathbb{T}_n} \gamma^i_{i_1, i_2} 
\mathcal{X}^{n,i_1}_t \big(\iota_\varepsilon ({\textstyle \frac{j-f_n t}{n}}) \big)
\mathcal{X}^{n,i_2}_t \big(\iota_\varepsilon ({\textstyle \frac{j-f_n t}{n}}) \big)
\nabla^n \varphi^n_j(t).
\end{aligned}
\end{equation*}
Then, we have the following estimate for these quadratic fields.

\begin{lemma}
\label{lem:diffusion_quadratic_fields_difference}
For each $\ell\in\mathbb{N}$, we have that 
\begin{equation*}
\begin{aligned}
&\mathbb{E}_n \bigg[\sup_{0\le t\le T} \bigg| \int_0^t \sum_{j\in\mathbb{T}_n} \gamma^i_{k_1k_2}
\bigg( \overrightarrow{(W^{k_1})}^\ell_j(s)
\overrightarrow{(W^{k_2})}^\ell_j (s)
- 
\overrightarrow{(u^{k_1})}^\ell_j(s)
\overrightarrow{(u^{k_2})}^\ell_j (s)
\bigg) \nabla^n \varphi^n_j (s)
ds \bigg|^2 \bigg]\\
&\quad\le C T^2 \bigg( \frac{n^{1/2}}{\ell} + \frac{n}{\ell^2} \bigg) 
\| \partial_x \varphi\|^2_{L^2(\mathbb{T})}. 
\end{aligned}
\end{equation*}
\end{lemma}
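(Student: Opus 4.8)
The plan is to linearize the difference of the two quadratic block fields. Writing $D^i_j := \overline{W}^i_j - \overline{u}^i_j$ for the centered discrepancy and noting $\overrightarrow{(D^i)}^\ell_j = \overrightarrow{(W^i)}^\ell_j - \overrightarrow{(u^i)}^\ell_j$, one has the exact telescoping
\begin{equation*}
\overrightarrow{(W^{k_1})}^\ell_j \overrightarrow{(W^{k_2})}^\ell_j - \overrightarrow{(u^{k_1})}^\ell_j \overrightarrow{(u^{k_2})}^\ell_j
= \overrightarrow{(D^{k_1})}^\ell_j \overrightarrow{(W^{k_2})}^\ell_j + \overrightarrow{(u^{k_1})}^\ell_j \overrightarrow{(D^{k_2})}^\ell_j ,
\end{equation*}
so it suffices to bound each resulting time integral. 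The crucial input is that, by the Taylor expansion \eqref{eq:diffusion_taylor_w}, $D^i_j = \frac{1}{\sqrt{n}}\gamma^i_{k_1k_2}\big(u^{k_1}_j u^{k_2}_j - E_{\nu_n}[u^{k_1}_j u^{k_2}_j]\big) + O(n^{-1})$, whence $D^i_j$ is centered under $\nu_n$ with $E_{\nu_n}[(D^i_j)^2]\le C/n$; the remainder bound $|\varepsilon_j|\le Cn^{-3/2}e^{2\gamma_V|u_j|}$ together with the uniform exponential moment bound of Lemma \ref{lem:diffusion_uniform_moment_bound} controls this and the higher moments of $D^i_j$ uniformly in $n$.

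I would then strip off the time integral by the deterministic inequality $\sup_{0\le t\le T}|\int_0^t F(s)\,ds|^2 \le T\int_0^T F(s)^2\,ds$ and invoke stationarity of the process under $\nu_n$. Since the covariance structure of the block averages is translation invariant and the weights $\sum_j (\nabla^n\varphi^n_j(s))^2$ are independent of $s$, this reduces each term to $T^2$ times a static second moment $E_{\nu_n}[F^2]$ with $F = \gamma^i_{k_1k_2}\sum_j \overrightarrow{(D^{k_1})}^\ell_j \overrightarrow{(W^{k_2})}^\ell_j\, \nabla^n\varphi^n_j$ (and the symmetric term). The appearance of $T^2$ in the target is exactly what this route produces, in contrast to the linear-in-$T$ bound a Kipnis--Varadhan argument would give.

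The heart of the matter is the static bound on $E_{\nu_n}[F^2]$. Expanding into a double sum over $j,j'$ and writing $g_j := \nabla^n\varphi^n_j$, I would exploit that $\nu_n$ is a product measure: the factors at site $j$ depend only on the coordinates $u_j,\dots,u_{j+\ell-1}$, so when $|j-j'|\ge \ell$ the two blocks are independent and the cross expectation factorizes. I would split the sum into the near-diagonal band $|j-j'|<\ell$ and its complement. On the band I would apply the Cauchy--Schwarz inequality together with the moment estimates $E_{\nu_n}[(\overrightarrow{(D^i)}^\ell_j)^2]\le C/(n\ell)$, $E_{\nu_n}[(\overrightarrow{(W^i)}^\ell_j)^2]\le C/\ell$ and the corresponding fourth-moment bounds for block sums of independent variables; off the diagonal I would use the factorization, keeping the centering of the $D$-factors and, where convenient, the telescoping identity $\sum_j \nabla^n\varphi^n_j = 0$. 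The weights are finally summed via $\sum_{|j-j'|<\ell}|g_j||g_{j'}|\le 2\ell\sum_j g_j^2$ and the Riemann-sum comparison $\sum_j (\nabla^n\varphi^n_j)^2 \le Cn\|\partial_x\varphi\|^2_{L^2(\mathbb{T})}$, and the combination of these estimates yields the two terms $n^{1/2}/\ell$ and $n/\ell^2$ in the claimed bound.

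I expect the main obstacle to be the combinatorial bookkeeping of this double sum: one must simultaneously track the local support of the block averages (which buys independence off the diagonal), the differing orders of the factors — size $n^{-1/2}$ for each $\overrightarrow{(D)}^\ell$ against size $O(1)$ for $\overrightarrow{(W)}^\ell$ and $\overrightarrow{(u)}^\ell$ — and the multi-species coupling $\gamma^i_{k_1k_2}$, so that the small factor from $D$ is retained after summation. Because $\gamma^i_{k_1k_2}$ genuinely mixes components, the factorization of cross expectations must be carried out componentwise rather than reduced to the single-species computation of \cite{jara2020stationary}, which is the one place the present multi-species setting requires care beyond the scalar case.
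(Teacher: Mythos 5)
Your proposal is correct and follows essentially the same route as the paper, whose entire proof of this lemma is the remark that the estimate ``straightforwardly follows'' from the Taylor expansion \eqref{eq:diffusion_taylor_w} and the Cauchy--Schwarz inequality --- i.e.\ exactly your skeleton of writing $D^i_j=\overline{W}^i_j-\overline{u}^i_j=O(n^{-1/2})$ in $L^2(\nu_n)$ (with the remainder controlled by Lemma \ref{lem:diffusion_uniform_moment_bound}), telescoping the difference of quadratic block fields, bounding $\sup_{0\le t\le T}|\int_0^t F|^2\le T\int_0^T F^2$, and computing static moments under the product measure. Your extra refinements (off-diagonal independence of blocks, centering, and $\sum_j \nabla^n\varphi^n_j=0$) are sound but more than is needed: the crude double-sum bound $\big(\sum_j|\nabla^n\varphi^n_j|\big)^2\max_j E_{\nu_n}[G_j^2]$ together with your fourth-moment estimates for block averages already gives $CT^2(n/\ell^2)\|\partial_x\varphi\|^2_{L^2(\mathbb{T})}$, which implies (indeed improves on) the stated bound.
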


Here, recall that $W^i_j$ is expended in terms of $u^i_j$ as a consequence of Taylor expansion, from which the above estimate straightforwardly follows by the Cauchy-Schwarz inequality.
Note here that the sequence $\{\mathcal{X}^{n,i}: 1 \le i \le d \}_n$ is tight and that any limit point satisfies the condition \textbf{(S)}. 
Hence, we obtain the following limit for each component. 
\begin{equation*}
\mathcal{A}^{\varepsilon, i}_{s,t}
= \lim_{n \to \infty} \int_s^t \sum_{j \in \mathbb{T}_n} 
\gamma^i_{k_1k_2} 
\overrightarrow{(u^{k_1})}^{\varepsilon n}_j(r)
\overrightarrow{(u^{k_2})}^{\varepsilon n}_j (r) 
\nabla^n \varphi^n_j (r) dr . 
\end{equation*}
Though $\iota_\varepsilon $ does not belong to $\mathcal{S}(\mathbb{T})$, this is not problematic since we can approximate $\iota_\varepsilon$ by functions which belong to $\mathcal{S}(\mathbb{T})$.
(See \cite[Section 5.3]{gonccalves2014nonlinear} for more details.)

Now, by Theorem \ref{thm:2BG} and Lemma \ref{lem:diffusion_quadratic_fields_difference}, we have that 
\begin{equation*}
\begin{aligned}
&\mathbb{E} \bigg[ 
\bigg| \mathcal{B}^{i}_t (\varphi) - \mathcal{B}^i_s (\varphi) 
- \int_s^t \sum_{j \in \mathbb{T}_n}  \gamma^i_{k_1k_2} 
\overrightarrow{(u^{k_1})}^{\varepsilon n}_j(r)
\overrightarrow{(u^{k_2})}^{\varepsilon n}_j (r)
\nabla^n \varphi^n_j(r) dr \bigg|^2 \bigg]\\ 
&\quad\le C \bigg( \frac{(t-s)\varepsilon n}{n} 
+ \frac{(t-s)^2 n^{1/2}}{\varepsilon n}
+ \frac{(t-s)^2}{(\varepsilon n)^2} \bigg)
\| \partial_x \varphi \|^2_{L^2(\mathbb{T})} . 
\end{aligned}
\end{equation*}
Taking the limit $n \to \infty$ and then $\varepsilon\to0$, we obtain 
\begin{equation}
\label{eq:BA_difference}
\mathbb{E} \big[ \big| \mathcal{B}^i_t(\varphi) - \mathcal{B}^i_s(\varphi) 
- \mathcal{A}^{\varepsilon,i}_{s,t} (\varphi) \big|^2 \big]
\le C (t-s) \varepsilon \| \partial_x \varphi \|^2_{L^2(\mathbb{T})} . 
\end{equation}
Hence, \textbf{(EC)} follows from the triangle inequality so that Proposition \ref{prop:quadratic} assures the existence of the limit 
\begin{equation*}
\mathcal{A}^i_{t} (\varphi)
= \lim_{\varepsilon \to 0} \mathcal{A}^i_{0, t} (\varphi).
\end{equation*}
Here we note that the above convergence does not hold straightforwardly since the function $\iota_\varepsilon(x;\cdot)$ is not a Schwartz function. 
However, we can approximate this function by functions in $\mathcal{S}(\mathbb{T})$ and justify the convergence. 
Therefore, the estimate \eqref{eq:BA_difference} yields $\mathcal{B} = \mathcal{A}$ and thus the first two conditions of Definition \ref{def:energy_solution} are satisfied for the limiting point.
Since we can repeat the same argument for the time-reversed process, the condition (3) of Definition \ref{def:energy_solution} is analogously obtained.
Hence, the proof of Theorem \ref{thm:diffusion_sbe_derivation_main} is completed.

%%%%%%%%%%%%%%%%%%%%%%%%%%
%    Taylor expansion    %
%%%%%%%%%%%%%%%%%%%%%%%%%%
\section{Taylor Expansion Argument}
\label{sec:diffusion_taylor_expansion}
In this section, we expand $u_j^i$ in terms of $W^i_j=\partial_i V_\beta(u_j)$. 
Recall that we set $V_\beta(u) = \beta^{-2}V(\beta u)$ and $\beta = n^{-1/2}$.
By Assumption \ref{ass:diffusion_potential} and Taylor's theorem, we can expand $W^i_j$ as 
\begin{equation}
\label{eq:Taylor_base}
W^i_j = u^i_j 
+ \frac{1}{\sqrt{n}}\gamma^i_{i_1 i_2} u^{i_1}_j u^{i_2}_j 
+ \frac{1}{n} \delta^i_{i_1 i_2 i_3} u^{i_1}_j u^{i_2}_j u^{i_3}_j
+ O (n^{-3/2}),
\end{equation}
for large $n$.
Here recall that we set $\gamma_{i_2i_3}^{i_1}= \partial^3_{i_1i_2i_3}V(0)/2$ and $\delta_{i_2i_3i_4}^{i_1} = \partial^4_{i_1i_2i_3i_4} V(0)/6$ and abbreviated the summation symbol by Einstein's convention. 
In the above expansion, we can show that the residual $O(n^{-3/2})$-term, say $R_j$, is negligible when $n\to\infty$. 
This is justified by the uniform bound on the exponential moment, recall Lemma \ref{lem:diffusion_uniform_moment_bound}.
Indeed, note that $R_j$ has the bound $|R_j| \le Cn^{-3/2}e^{2\gamma_V|u_j|}$ where $\gamma_V>0$ is the constant in Assumption \ref{ass:diffusion_potential}. 
Then, a direct $L^2$-computation shows 
\begin{equation}
\label{eq:error}
\begin{aligned}
 \mathbb{E}_n \bigg[ \sup_{0 \le t \le T} \bigg| \sqrt{n} \int_0^t \sum_{j \in \mathbb{T}_n}
\overline{R}_j \nabla^n \varphi^n_j(s) \bigg|^2 \bigg] 
%=\mathbb{E}_n \bigg[ \sup_{0 \le t \le T} \bigg| \sqrt{n} \int_0^t \sum_{j\in \mathbb{T}_n} (R_j - E_{\nu_n}[R_j]) \nabla^n \varphi^n_j(s) \bigg|^2 \bigg] 
\le  C \frac{T^2}{n} \| \partial_x \varphi \|^2_{L^2(\mathbb{T})} ,
\end{aligned}
\end{equation}
where $\overline{R}_j=R_j -E_{\nu_n}[R_j]$ denotes a centered variable, and we used Lemma \ref{lem:diffusion_uniform_moment_bound} which is assumed to hold with $\gamma=2\gamma_V$.
Thus, the error term with order $O(n^{-3/2})$ vanishes in the limit $n \to \infty$. 
From now on, we replace order-two terms in the utmost right-hand side of \eqref{eq:Taylor_base} using $W^i_j$. 
To achieve our goal, first we note that 
\begin{equation}
\label{eq:multiTaylor_ab}
\begin{aligned}
& \frac{1}{\sqrt{n}} a^i_{i_1 i_2} W^{i_1}_j u^{i_2}_j 
+ \frac{1}{n} b^i_{i_1 i_2 i_3} W^{i_1}_j u^{i_2}_j u^{i_3}_j  \\
& \quad = \frac{1}{\sqrt{n}} a^i_{i_1 i_2} u^{i_1}_j u^{i_2}_j 
+ \frac{1}{n} (a^i_{k i_1}\gamma^k_{i_2 i_3} + b^i_{i_1 i_2 i_3}) u^{i_1}_j u^{i_2}_j u^{i_3}_j +R^{(1)}_j,
\end{aligned}
\end{equation}
by \eqref{eq:Taylor_base} where $R^{(1)}_j$ denotes an error term which satisfies the bound \eqref{eq:error}.
In the sequel, we write error terms which do not affect the limit by $R^{(k)}_j$ for each $k \in \mathbb{N} $. 
Now choose $a^i_{i_1i_2} $ and $b^i_{i_1i_2i_3}$ in order that the leading terms of the identity \eqref{eq:multiTaylor_ab} and $W^i_j - u^i_j$ coincide: $a^i_{i_1i_2} = \gamma^i_{i_1i_2}$, $b^i_{i_1i_2i_3} = \delta^i_{i_1i_2 i_3} - \gamma^i_{ki_1} \gamma^k_{i_2i_3}$.
As a result, 
\begin{equation}
\label{eq:w-u}
W^i_j -u^i_j 
= \frac{1}{\sqrt{n}} \gamma^i_{i_1 i_2} W^{i_1}_j u^{i_2}_j 
+ \frac{1}{n} (\delta^i_{i_1i_2 i_3} - \gamma^i_{ki_1} \gamma^k_{i_2i_3}) W^{i_1}_j u^{i_2}_j u^{i_3}_j + R^{(1)}_j . 
\end{equation}
Next, we replace $u_j^i$ by $W^i_j$ in the order-two term in the above display. 
For that purpose, we use identities. 
\begin{equation*}
\begin{aligned}
 L(u^{i_1}_ju^{i_2}_j) 
&= (W^{i_1}_{j-1}-W^{i_1}_j ) u^{i_2}_j + (W^{i_2}_{j-1} - W^{i_2}_j)u^{i_1}_j
+ 2 \mathbf{1}_{i_1=i_2}, \\
 L(u^{i_1}_ju^{i_2}_j u^{i_3}_j) 
&= (W^{i_1}_{j-1}-W^{i_1}_j ) u^{i_2}_j u^{i_3}_j
+ (W^{i_2}_{j-1} - W^{i_2}_j)u^{i_1}_j u^{i_3}_j 
+ (W^{i_3}_{j-1} - W^{i_3}_j)u^{i_1}_j u^{i_2}_j \\
&\quad + 2 u_{i_1}\mathbf{1}_{i_2=i_3} 
+ 2 u_{i_2}\mathbf{1}_{i_3=i_1} 
+ 2 u_{i_3}\mathbf{1}_{i_1=i_2} ,
\end{aligned}
\end{equation*}
which hold for each $i_1, i_2 ,i_3 \in \{1,\ldots, d\}$. 
Here, we note that the terms in the range of $L$ do not affect the limit according to the following result. 

\begin{lemma}
\label{lem:generator}
There exists a constant $C>0$ such that 
\begin{equation*}
\mathbb{E}_n \bigg[ \sup_{0 \le t \le T} \bigg| \int_0^t \sum_{j\in \mathbb{T}_n} L(u^{i_1}_j u^{i_2}_j) \nabla^n \varphi^n_j (s) ds \bigg|^2 \bigg]
\le C \frac{T}{n} ,
\end{equation*}
and 
\begin{equation*}
\mathbb{E}_n \bigg[ \sup_{0 \le t \le T} \bigg| \int_0^t \sum_{j\in \mathbb{T}_n} L(u^{i_1}_j u^{i_2}_j u^{i_3}_j) \nabla^n \varphi^n_j (s) ds \bigg|^2 \bigg]
\le C \frac{T}{n} .
\end{equation*}
\end{lemma}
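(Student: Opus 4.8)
The plan is to recognise the integrand as a term lying in the range of the generator and to estimate it through the Kipnis--Varadhan inequality, so that the supremum over $t$ is handled automatically. I will present the quadratic case; the cubic one is entirely analogous. Writing $g_j = u^{i_1}_j u^{i_2}_j$ and $\overline{g}_j = g_j - E_{\nu_n}[g_j]$, and using that $L$ is linear with deterministic coefficients $\nabla^n\varphi^n_j(s)$, I have
\begin{equation*}
\sum_{j\in\mathbb{T}_n} L(u^{i_1}_j u^{i_2}_j)\nabla^n\varphi^n_j(s) = L h_s, \qquad h_s = \sum_{j\in\mathbb{T}_n}\overline{g}_j\,\nabla^n\varphi^n_j(s).
\end{equation*}
The constant term $2\mathbf{1}_{i_1=i_2}$ in the explicit expression for $L(u^{i_1}_j u^{i_2}_j)$ disappears, since $\sum_j\nabla^n\varphi^n_j=0$ on the torus. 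Because $L=n^{-2}L_n$ and $E_{\nu_n}[L_n h_s]=0$ by invariance of $\nu_n$, the quantity $F(s,\cdot)=n^{-2}L_n h_s$ is mean-zero, and the Kipnis--Varadhan inequality reduces the claim to showing $n^{-4}\|L_n h_s\|^2_{-1,n}\le Cn^{-1}$, that is, $\|L_n h_s\|^2_{-1,n}\le Cn^{3}\|\partial_x\varphi\|^2_{L^2(\mathbb{T})}$ uniformly in $s$.

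Next I would split $L_n=S_n+A_n$ and treat the two parts separately. For the symmetric part, the standard duality gives $\|S_n h_s\|^2_{-1,n}\le\|h_s\|^2_{1,n}$, and a direct computation of the Dirichlet form \eqref{eq:H1norm} yields
\begin{equation*}
\|h_s\|^2_{1,n} = \frac{n^2}{2}\sum_{i=1}^d\sum_{j\in\mathbb{T}_n}E_{\nu_n}\big[\big((\partial^i_{j+1}-\partial^i_j)h_s\big)^2\big]\le C n^2\sum_{j\in\mathbb{T}_n}(\nabla^n\varphi^n_j)^2 \le Cn^{3}\|\partial_x\varphi\|^2_{L^2(\mathbb{T})},
\end{equation*}
where I use that $\partial^i_j\overline{g}_j$ is linear in $u_j$ and hence has $O(1)$ second moment by Lemma \ref{lem:diffusion_uniform_moment_bound}, together with $\sum_j(\nabla^n\varphi^n_j)^2\le Cn\|\partial_x\varphi\|^2_{L^2(\mathbb{T})}$.

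The hard part is the antisymmetric contribution $\|A_n h_s\|^2_{-1,n}$. Starting from the variational formula \eqref{eq:H-1norm} and writing $\chi^i_k=(\partial^i_k\overline{g}_k)\nabla^n\varphi^n_k$, I would first apply the integration-by-parts identity of Lemma \ref{lem:ibp} to each factor $\overline{W}^i_k$ appearing in $A_n h_s$; since the remaining factors depend only on $u_{k\pm1}$, the derivative $\partial^i_k$ falls solely on the test function $g$, giving $\langle A_n h_s,g\rangle_{L^2(\nu_n)}=\tfrac{n^2}{2}\sum_{i,k}E_{\nu_n}[(\chi^i_{k+1}-\chi^i_{k-1})\partial^i_k g]$. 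The delicate point is that this naive form pairs $g$ with the bare derivative $\partial^i_k g$, which cannot be absorbed into $\|g\|^2_{1,n}$ (the latter controls only discrete gradients $(\partial^i_{k+1}-\partial^i_k)g$). The remedy is a summation by parts in $k$, which transfers the spatial difference onto $g$ and rewrites the pairing as $-\tfrac{n^2}{2}\sum_{i,k}E_{\nu_n}[\chi^i_k\,(\partial^i_{k+1}-\partial^i_{k-1})g]$, so that $g$ now genuinely appears through discrete gradients. A Young inequality then splits this into a multiple of $\|g\|^2_{1,n}$, which the variational formula absorbs, and a remainder $Cn^2\sum_{i,k}E_{\nu_n}[(\chi^i_k)^2]\le Cn^3\|\partial_x\varphi\|^2_{L^2(\mathbb{T})}$, again controlled by Lemma \ref{lem:diffusion_uniform_moment_bound}.

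Combining the two parts gives $\|L_n h_s\|^2_{-1,n}\le Cn^3\|\partial_x\varphi\|^2_{L^2(\mathbb{T})}$, whence Kipnis--Varadhan yields the stated bound $CT/n$. For the cubic estimate the same scheme applies verbatim: $\partial^i_k$ of a cubic monomial is quadratic in $u_k$, still with $O(1)$ moments under $\nu_n$, so every power of $n$ is unchanged. I expect the summation-by-parts rearrangement inside the $A_n$ estimate to be the only genuinely delicate step, precisely because it is what converts an uncontrollable bare-gradient pairing into one absorbable by the Dirichlet form.
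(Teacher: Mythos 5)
Your proof is correct, and it lands on the paper's bound through the same underlying machinery --- Kipnis--Varadhan, the variational formula \eqref{eq:H-1norm}, the integration-by-parts identity of Lemma \ref{lem:ibp}, and a Young inequality at scale $n^{-2}$ --- but the decomposition is genuinely different. The paper never splits $L_n$ into $S_n+A_n$: it pairs $\sum_j L(u^{i_1}_ju^{i_2}_j)\nabla^n\varphi^n_j$ directly with a test function $g$ and exploits the fact that the drift contribution to $L(u^{i_1}_ju^{i_2}_j)$ arrives already in the difference form $(W^{i_1}_{j-1}-W^{i_1}_j)u^{i_2}_j$, so a single application of Lemma \ref{lem:ibp} produces the discrete gradients $(\partial^{i_1}_{j-1}-\partial^{i_1}_j)g$ immediately, with the constants generated by $\partial^{i_1}_j u^{i_2}_j$ cancelling exactly against the $2\mathbf{1}_{i_1=i_2}$ term; Young with $\alpha\sim n^{-2}$ then gives $\frac{C}{n^2}\sum_j E_{\nu_n}[(u^i_j)^2](\nabla^n\varphi^n_j)^2+\|g\|^2_{1,n}$ and Kipnis--Varadhan finishes. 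Your route dismantles this difference structure by passing to $S_n$ and $A_n$ and must then restore it by hand: the generic duality bound $\|S_nh_s\|_{-1,n}\le\|h_s\|_{1,n}$ disposes of the symmetric half, and your summation by parts in $k$, converting $(\chi^i_{k+1}-\chi^i_{k-1})\partial^i_k g$ into $-\chi^i_k(\partial^i_{k+1}-\partial^i_{k-1})g$, is precisely the rearrangement the paper gets for free from the drift's built-in differences --- and you correctly flagged it as the crux, since the bare pairing with $\partial^i_k g$ is not absorbable into $\|g\|^2_{1,n}$. What your version buys is modularity: every estimate uses only the first derivatives $\partial^i_k g_k$ of the local monomial, which have $O(1)$ moments by Lemma \ref{lem:diffusion_uniform_moment_bound}, so the cubic case is indeed verbatim, whereas the paper rewrites the explicit pairing for the cubic identity separately. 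Both arguments yield $\|L_nh_s\|^2_{-1,n}\le Cn^3\|\partial_x\varphi\|^2_{L^2(\mathbb{T})}$, hence the claimed $CT/n$.
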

\begin{proof}
Take an arbitrary function $g$ which belongs to $\mathbb{R}^{d \times n}$. 
Then, by the integration-by-parts formula we have that 
\begin{equation*}
\begin{aligned}
\langle L(u^{i_1}_j u^{i_2}_j) , g \rangle_{L^2(\nu_n)} 
& = \langle (W^{i_1}_{j-1}-W^{i_1}_j ) u^{i_2}_j + (W^{i_2}_{j-1} - W^{i_2}_j)u^{i_1}_j
+ 2 \mathbf{1}_{i_1=i_2} , g \rangle_{L^2(\nu_n)} \\
& = \langle u^{i_2}_j(\partial^{i_1}_{j-1}- \partial^{i_1}_j )g 
+ u^{i_1}_j (\partial^{i_2}_{j-1} - \partial^{i_2}_j) \rangle_{L^2(\nu_n)} .
\end{aligned}
\end{equation*}
Then by Young's inequality, for any $\alpha > 0$ we have that
\begin{equation*}
\begin{aligned}
& \bigg\langle 2 \sum_{j\in \mathbb{T}_n} L(u^{i_1}_j u^{i_2}_j) \nabla^n \varphi^n_j(s)ds, g \bigg\rangle_{L^2(\nu_n)} \\
&\quad \le 4 E_{\nu_n}\bigg[ \sum_{i=1}^d \sum_{j\in \mathbb{T}_n}
\bigg\{ \alpha (u^i_j \nabla^n \varphi^n_j)^2 
+ \frac{1}{\alpha} \big( (\partial^i_{j-1} - \partial^i_j)g\big)^2 \bigg\} \bigg] .
\end{aligned}
\end{equation*}
Taking $\alpha = 8/n^2$, the utmost right-hand side of the last display can be bounded by 
\begin{equation*}
\frac{16}{n^2} \sum_{i=1}^d \sum_{j\in \mathbb{T}_n} E_{\nu_n}[(u^i_j)^2](\nabla^n \varphi^n_j)^2 
 + \| f \|^2_{1, n} .
\end{equation*}
Hence, we obtain the desired bound with the help of the Kipnis-Varadhan inequality.
On the other hand, again by the integration-by-parts, we have that
\begin{equation*}
\begin{aligned}
&\langle L(u^{i_1}_j u^{i_2}_j u^{i_3}_j), g \rangle_{L^2(\nu_n)} \\
&\quad = \langle u^{i_2}_ju^{i_3}_j(\partial^{i_1}_{j-1}- \partial^{i_1}_j )g 
+ u^{i_3}_ju^{i_1}_j (\partial^{i_2}_{j-1} - \partial^{i_2}_j)g 
+ u^{i_1}_ju^{i_2}_j (\partial^{i_3}_{j-1} - \partial^{i_3}_j)g
\rangle_{L^2(\nu_n)} .
\end{aligned}
\end{equation*}
Accordingly, we can obtain the bound for $L(u^{i_1}_j u^{i_2}_ju^{i_3}_j)$ in a similar way, which completes the proof.
\end{proof}

Now recall that $\gamma^i_{i_1i_2}$, $\delta^i_{i_1i_2i_3}$ and $\gamma^i_{ki_1} \gamma^k_{i_2i_3}$ are symmetric in $i_1$, $i_2$ and $i_3$.
Moreover, we notice that constant terms are irrelevant since $\sum_{j} \nabla^n \varphi^n_j = 0$. 
Thus, according to Lemma \ref{lem:generator}, we can shift indices in \eqref{eq:w-u} by adding some small or linear terms as follows. 
\begin{equation*}
\begin{aligned}
W^i_j -u^i_j 
& = \frac{1}{\sqrt{n}} \gamma^i_{i_1 i_2} W^{i_1}_{j-1} u^{i_2}_j 
+ \frac{1}{n} (\delta^i_{i_1i_2 i_3} - \gamma^i_{ki_1} \gamma^k_{i_2i_3}) W^{i_1}_{j-1} u^{i_2}_j u^{i_3}_j \\ 
& \quad 
+ \frac{2}{n} \sum_{i_2=1}^d (\delta^i_{i_1 i_2 i_2} - \gamma^i_{ki_1}\gamma^k_{i_2i_2})u^{i_1}_j  
+ R^{(2)}_j . 
\end{aligned}
\end{equation*}
Again by a Taylor expansion, we have that
\begin{equation*}
\frac{1}{\sqrt{n}} W^{i_1}_{j-1}W^{i_2}_j 
= \frac{1}{\sqrt{n}} W^{i_1}_{j-1}u^{i_2}_j 
+ \frac{1}{n}\gamma^{i_2}_{k_1 k_2}W^{i_1}_{j-1}u^{k_1}_j u^{k_2}_j 
+ R^{(3)}_j, 
\end{equation*}
which yields 
\begin{equation*}
\begin{aligned}
W^i_j -u^i_j 
& = \frac{1}{\sqrt{n}} \gamma^i_{i_1 i_2} W^{i_1}_{j-1} W^{i_2}_j 
+ \frac{1}{n} (\delta^i_{i_1i_2 i_3} - 2\gamma^i_{ki_1} \gamma^k_{i_2i_3}) W^{i_1}_{j-1} u^{i_2}_j u^{i_3}_j \\ 
& \quad + \frac{2}{n}
\sum_{i_2=1}^d (\delta^i_{i_1 i_2 i_2} - \gamma^i_{ki_1} \gamma^k_{i_2 i_2})u^{i_1}_j
+ R^{(4)}_j . 
\end{aligned}
\end{equation*}
Next, we are concerned with order-three terms. 
Note here that one can replace $W^{i_1}_{j-1} u^{i_2}_j u^{i_3}_j$ in the above display by $W^{i_1}_{j-1} W^{i_2}_j W^{i_3}_j$ with an error term satisfying the bound \eqref{eq:error} since each $u^i_j$ is replaced by $W^i_j$ with cost $O(n^{-1/2})$.
In addition, hereafter we focus on expanding the order-three term $W^{i_1}_{j-1} W^{i_2}_j W^{i_3}_j$ by spatially scattered ones such as $W^{i_1}_{j-1} W^{i_2}_j W^{i_3}_{j+1}$. 
For that purpose, we use identities
\begin{equation*}
\begin{aligned}
L(u_{j-1}^{i_1}u_{j}^{i_2} u_{j+1}^{i_3}) 
 &= W^{i_1}_{j-2}W^{i_2}_j W^{i_3}_{j+1} 
 + W^{i_1}_{j-1} W^{i_2}_{j-1} W^{i_3}_{j+1} 
 + W^{i_1}_{j-1} W^{i_2}_j W^{i_3}_{j} \\
 & \quad - 3W^{i_1}_{j-1} W^{i_2}_j W^{i_3}_{j+1} 
 - u^{i_3}_{j}\mathbf{1}_{i_1=i_2} - u^{i_1}_{j}\mathbf{1}_{i_2=i_3}
 + R_j^{(5)}, \\
L(u_{j-1}^{i_1} u_{j-1}^{i_2} u_{j+1}^{i_3}) 
 &= W^{i_1}_{j-2}W^{i_2}_{j-1} W^{i_3}_{j+1} 
 + W^{i_1}_{j-1} W^{i_2}_{j-2} W^{i_3}_{j+1} 
 + W^{i_1}_{j-1} W^{i_2}_{j-1} W^{i_3}_{j} \\
 &\quad - 3W^{i_1}_{j-1} W^{i_2}_{j-1} W^{i_3}_{j+1}
 + 2u^{i_3}_j \mathbf{1}_{i_1=i_2}
 + R_j^{(6)}, \\
L(u_{j-1}^{i_1} u_{j-1}^{i_2} u_{j}^{i_3}) 
 &= W^{i_1}_{j-2}W^{i_2}_{j-1} W^{i_3}_{j} 
 + W^{i_1}_{j-1} W^{i_2}_{j-2} W^{i_3}_{j} 
 + W^{i_1}_{j-1} W^{i_2}_{j-1} W^{i_3}_{j-1} \\
 &\quad - 3 W^{i_1}_{j-1}W^{i_2}_{j-1} W^{i_3}_{j}
 + 2u^{i_3}_j (\mathbf{1}_{i_1=i_2} - \mathbf{1}_{i_1=i_2=i_3}) 
 + R_j^{(7)}, \\
L(u_{j}^{i_1} u_{j}^{i_2} u_{j}^{i_3}) 
 &= W^{i_1}_{j-1} W^{i_2}_j W^{i_3}_j 
 + W^{i_1}_j W^{i_2}_{j-1} W^{i_3}_j 
 + W^{i_1}_j W^{i_2}_j W^{i_3}_{j-1}\\
 &\quad - 3 W^{i_1}_{j-1} W^{i_2}_{j-1} W^{i_3}_{j-1} 
 + 2u^{i_1}_j \mathbf{1}_{i_2=i_3}
 + 2u^{i_2}_j \mathbf{1}_{i_3=i_1}
 + 2u^{i_3}_j \mathbf{1}_{i_1=i_2}
 + R_{j}^{(8)} . 
\end{aligned}
\end{equation*}
Here note that we have $u^i_j-W^i_j = O(n^{-1/2})$ and that the cost for shifting $W_j^3$ to $W_{j-1}^3 $ in the fourth identity is negligible. 
As a consequence, the reminder terms are irrelevant to the limit. 
Combining everything together, we have that 
\begin{equation}
\label{eq:order3expansion}
\begin{aligned}
& \frac{28}{3} W^{i_1}_{j-1}W^{i_2}_{j}W^{i_3}_{j}
+ \frac{1}{3} W^{i_1}_{j}W^{i_2}_{j-1}W^{i_3}_{j}
+ \frac{1}{3} W^{i_1}_{j}W^{i_2}_{j}W^{i_3}_{j-1}\\
&\quad = 9L(u^{i_1}_{j-1}u^{i_2}_{j}u^{i_3}_{j+1})
+ 3L(u^{i_1}_{j-1}u^{i_2}_{j-1}u^{i_3}_{j+1}) 
+ L(u^{i_1}_{j-1}u^{i_2}_{j-1}u^{i_3}_{j}) 
+ \frac{1}{3} L(u^{i_1}_{j}u^{i_2}_{j}u^{i_3}_{j}) \\
&\qquad -9W^{i_1}_{j-2}W^{i_2}_{j}W^{i_3}_{j+1}
+ 27 W^{i_1}_{j-1}W^{i_2}_{j}W^{i_3}_{j+1}
- 3 W^{i_1}_{j-2}W^{i_2}_{j-1}W^{i_3}_{j+1} 
- 3 W^{i_1}_{j-1}W^{i_2}_{j-2}W^{i_3}_{j+1} \\
& \qquad - W^{i_1}_{j-2}W^{i_2}_{j-1}W^{i_3}_{j} 
- W^{i_1}_{j-1}W^{i_2}_{j-2}W^{i_3}_{j}
+ 2u^{i_3}_j \mathbf{1}_{i_1=i_2=i_3} \\
&\qquad + \frac{25}{3} u^{i_1}_j \mathbf{1}_{i_2=i_3}
 - \frac{2}{3} u^{i_2}_j \mathbf{1}_{i_3=i_1}
 + \frac{1}{3} u^{i_3}_j \mathbf{1}_{i_1=i_2}
 + R_{j}^{(9)} .
\end{aligned}
\end{equation}
The next result shows that order-three terms that are spatially scattered and centered are negligible.

\begin{lemma}
There exists a constant $C>0$ such that 
\begin{equation*}
\mathbb{E}_n \bigg[ \sup_{0 \le t \le T} \bigg| \int_0^t \sum_{j \in \mathbb{T}_n} 
\overline{W}^{i_1}_{j-1}(s) \overline{W}^{i_2}_j(s) \overline{W}^{i_3}_{j+1}(s)
\nabla^n \varphi^n_j (s) ds \bigg|^2 \bigg]
\le C \bigg( \frac{T \ell}{n^2} + \frac{T^2}{\ell^2} \bigg) \| \partial_x \varphi \|^2_{L^2(\mathbb{T})}. 
\end{equation*}
\end{lemma}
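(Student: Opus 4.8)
The plan is to prove this third-order estimate by the same Boltzmann--Gibbs strategy used for Theorem~\ref{thm:2BG}: reduce the dynamic bound to a static Dirichlet-form estimate through the Kipnis--Varadhan inequality, and then successively replace each of the three centered factors $\overline{W}^{i_1}_{j-1}$, $\overline{W}^{i_2}_j$, $\overline{W}^{i_3}_{j+1}$ by its local average $\overrightarrow{(W^{\bullet})}^\ell_\bullet$. The structural simplification compared with Theorem~\ref{thm:2BG} is that the three factors are supported on three \emph{distinct} sites $j-1,j,j+1$. Consequently, when we integrate one factor by parts, the product of the remaining two is automatically annihilated by the relevant discrete gradient operator for \emph{every} choice of $i_1,i_2,i_3$, so the separate treatment of coinciding indices that was needed for the scalar case in Theorem~\ref{thm:2BG} is not required here.

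First I would peel off the rightmost factor. Writing $\overline{W}^{i_3}_{j+1}-\overrightarrow{(W^{i_3})}^\ell_{j+1}=\sum_{k=0}^{\ell-2}(W^{i_3}_{j+1+k}-W^{i_3}_{j+2+k})\psi_k$ with $\psi_k=(\ell-k-1)/\ell$ exactly as in the proof of Theorem~\ref{thm:2BG}, and reindexing by the bond $(m,m+1)$, the difference between the target sum and its averaged version takes the form $\sum_m (W^{i_3}_m-W^{i_3}_{m+1})H_m$, where the prefactor $H_m$ gathers the contributions $\overline{W}^{i_1}_{j-1}\overline{W}^{i_2}_j\,\nabla^n\varphi^n_j\,\psi_{m-j-1}$ over the relevant range $j\le m-1$. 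Since $H_m$ then depends only on coordinates at sites $\le m-1$, it is annihilated by $\partial^{i_3}_m-\partial^{i_3}_{m+1}$, and Lemma~\ref{lem:ibp} gives $\langle \sum_m (W^{i_3}_m-W^{i_3}_{m+1})H_m,g\rangle_{L^2(\nu_n)}=E_{\nu_n}[\sum_m H_m(\partial^{i_3}_m-\partial^{i_3}_{m+1})g]$. Young's inequality and the definition of $\|\cdot\|_{1,n}$ in \eqref{eq:H1norm} bound this by $\tfrac{C}{n^2}\sum_m E_{\nu_n}[H_m^2]+\|g\|^2_{1,n}$, and a variance computation using the product structure of $\nu_n$ and Lemma~\ref{lem:diffusion_uniform_moment_bound} yields $\sum_m E_{\nu_n}[H_m^2]\le C\ell\sum_j(\nabla^n\varphi^n_j)^2$. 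Via the definition of $\|\cdot\|_{-1,n}$ in \eqref{eq:H-1norm} and the Kipnis--Varadhan inequality together with stationarity, this produces the $T\ell/n^2$ contribution. Averaging the leftmost factor $\overline{W}^{i_1}_{j-1}$ by telescoping to the \emph{left} (so that its block stays disjoint from sites $j,j+1$), and likewise the middle factor, reduces the whole sum, up to further one-block errors of the same order, to its fully averaged counterpart.

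It then remains to estimate $\int_0^t\sum_j \overrightarrow{(W^{i_1})}^\ell\,\overrightarrow{(W^{i_2})}^\ell\,\overrightarrow{(W^{i_3})}^\ell\,\nabla^n\varphi^n_j\,ds$ directly in $L^2$. Here I would split the spatial sum into blocks of length $\ell$ and use that each averaged and centered factor has variance of order $\ell^{-1}$, that the supports of the three averages around a given $j$ can be arranged disjoint, and that correlations between different $j$ persist only over a range of $O(\ell)$ sites. Combined with Cauchy--Schwarz in time, this gives the $T^2/\ell^2$ contribution. Adding the one-block and the averaged estimates, and using $\sum_j(\nabla^n\varphi^n_j)^2\le Cn\|\partial_x\varphi\|^2_{L^2(\mathbb{T})}$, produces the claimed bound $C(T\ell/n^2+T^2/\ell^2)\|\partial_x\varphi\|^2_{L^2(\mathbb{T})}$; in the application one takes $\ell\sim (Tn^2)^{1/3}$, so that both terms tend to $0$ as $n\to\infty$.

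The part I expect to be most delicate is the bookkeeping of the near-diagonal correlations in the two variance estimates $\sum_m E_{\nu_n}[H_m^2]\le C\ell\sum_j(\nabla^n\varphi^n_j)^2$ and in the final $L^2$ bound. Because the factors sit at neighbouring sites, the quantities $\overline{W}^{i_1}_{j-1}\overline{W}^{i_2}_j$ (and their averaged versions) at different $j$ fail to be independent when their supports overlap, so one must carefully track which pairs contribute and weigh them against the coefficients $\psi_k$ in order to obtain the correct powers of $\ell$ and $n$. Conceptually the argument is routine once the distinct-site structure has removed the index-coincidence difficulty, but it is precisely in this accounting that the stated exponents are pinned down.
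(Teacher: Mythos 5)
Your overall strategy (one-block replacement via integration by parts and Kipnis--Varadhan, followed by a direct $L^2$ bound on the averaged field) is the right family of argument, and it is indeed what the paper invokes: the paper's own proof is a terse case reduction --- for $i_1=i_2=i_3$ it cites the appendix of \cite{jara2020stationary}, for two coinciding indices it combines Theorem \ref{thm:2BG} with a direct $L^2$-computation for local averages, and for three distinct indices it repeats the one-block estimate. However, your central structural claim --- that because the three factors sit at the distinct sites $j-1,j,j+1$, the prefactor is annihilated by the relevant bond derivative at \emph{every} replacement step and for \emph{every} choice of indices --- fails at precisely the step you dismiss with ``and likewise the middle factor.'' Once the outer factors have been replaced by a left average supported in $[j-\ell,j-1]$ and a right average supported in $[j+1,j+\ell]$, any telescoping of $\overline{W}^{i_2}_j$ must run its bonds through the support of one of these blocks; and since $W^{i}_m=\partial_iV_\beta(u_m)$ depends on \emph{all} $d$ components of $u_m$, one has $\partial^{i_2}_m W^{i_3}_m=\partial^2_{i_2i_3}V_\beta(u_m)\neq 0$ in general even when $i_2\neq i_3$. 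Site separation, not index distinctness, is what produced the annihilation in your first two steps, and at the third step site separation is no longer available. Consequently the Leibniz correction $E_{\nu_n}\big[g\,(\partial^{i_2}_m-\partial^{i_2}_{m+1})H_m\big]$ does not vanish; it is a plain $L^2$ pairing with $g$ that cannot be absorbed into $\|g\|^2_{1,n}$ in the variational bound \eqref{eq:H-1norm} and must be estimated separately (it is of size $\ell^{-1}$ per bond with moments controlled by Lemma \ref{lem:diffusion_uniform_moment_bound}, so it is tractable --- but it is exactly the kind of term you claim the distinct-site structure has removed). The same overlap issue already appears in the second term of \eqref{eq:oneblock_decomposition} in the proof of Theorem \ref{thm:2BG}, which is part of why the paper separates index cases and delegates the hardest one to \cite{jara2020stationary} rather than asserting universal annihilation.

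A second concrete defect lies in your final $L^2$ estimate, where you assert that ``the supports of the three averages around a given $j$ can be arranged disjoint.'' Three blocks of length $\ell$ anchored at the consecutive sites $j-1$, $j$, $j+1$ cannot be pairwise disjoint: with the left block in $[j-\ell,j-1]$ and the right block in $[j+1,j+\ell]$, only the single site $j$ remains for the middle average. Achieving disjointness requires first translating one factor by a distance of order $\ell$, which is itself a replacement of one-block type carrying its own cost that must enter the bookkeeping --- precisely the ``delicate accounting'' you flag but do not carry out, and exactly where the powers of $\ell$ and $n$ in the stated bound are decided. In short, the skeleton matches the spirit of the paper's proof, but the two claims on which your simplification rests --- universal annihilation of the prefactor, and disjointness of the three blocks --- both fail, so as written the proposal has a genuine gap at the middle-factor replacement and in the final variance computation.
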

\begin{proof}
For the case $i_1 = i_2 = i_3$, the assertion is already shown in \cite[Appendix]{jara2020stationary}.   
Thus, it is sufficient to demonstrate for the other cases.
When two of $i_1$, $i_2$ and $i_3$ are equal, the assertion is a consequence of Theorem \ref{thm:2BG} and a direct $L^2$-computation for local averages. 
On the other hand, the case $i_1$, $i_2$ and $i_3$ are distinct is essentially the same as the one-block estimate. 
(See the proof of Theorem \ref{thm:2BG} or \cite[Lemma 3]{jara2020stationary}.)
\end{proof}

Note that the other order-three terms in the utmost right-hand side of \eqref{eq:order3expansion} can be treated in the same way. 
Moreover, similarly to Lemma \ref{lem:generator}, we can show that the terms in the range of $L$ do not affect the limit. 
Hence,  
\begin{equation*}
\begin{aligned}
W^i_j -u^i_j 
 = \frac{1}{\sqrt{n}} \gamma^i_{i_1 i_2} W^{i_1}_{j-1} W^{i_2}_j 
+ \frac{1}{n} (\delta^i_{i_1i_2 i_3} - 2\gamma^i_{ki_1} \gamma^k_{i_2i_3}) W^{i_1}_{j-1} W^{i_2}_j W^{i_3}_{j+1} 
+ \frac{1}{n} C^i_{i_1} u^{i_1}_j 
+ R^{(10)}_j  ,
\end{aligned}
\end{equation*}
where we set 
\begin{equation*}
C^i_{i_1} u^{i_1}_j 
= \sum_{i_2=1}^d \bigg( \frac{14}{5} \delta^i_{i_1 i_2 i_2} - \frac{18}{5} \gamma^i_{ki_1} \gamma^k_{i_2 i_2} \bigg) u^{i_1}_j
+ \bigg( \frac{1}{5} \delta^i_{i_1 i_1 i_1} - \frac{2}{5} \gamma^i_{ki_1} \gamma^k_{i_1 i_1} \bigg) u^{i_1}_j . 
\end{equation*}
Recalling $ E_{\nu_n} [W^i_j] = \lambda^i$ we replace variables $W^i_j$'s by centered ones.
Then we have that 
\begin{equation*}
\begin{aligned}
W^i_j -u^i_j 
& = \frac{1}{\sqrt{n}} \gamma^i_{i_1 i_2} \overline{W}^{i_1}_{j-1} \overline{W}^{i_2}_j 
+ \frac{1}{n} (\delta^i_{i_1i_2 i_3} - 2\gamma^i_{ki_1} \gamma^k_{i_2i_3}) \overline{W}^{i_1}_{j-1} \overline{W}^{i_2}_j \overline{W}^{i_3}_{j+1} \\ 
& \quad + \frac{2}{\sqrt{n}} \gamma^i_{i_1i_2} \lambda^{i_2} W^{i_1}_j
+ \frac{3}{n} (\delta^i_{i_1i_2 i_3} - 2\gamma^i_{ki_1} \gamma^k_{i_2i_3})
\lambda^{i_2} \lambda^{i_3} W^{i_1}_j 
+ \frac{1}{n} C^i_{i_1} u^{i_1}_j 
+ R^{(11)}_j . 
\end{aligned}
\end{equation*}
We use the same expansion for the third and fourth terms in the right-hand side of the last identity. 
Then we have 
\begin{equation}
\begin{aligned}
W^i_j -u^i_j 
& = \frac{1}{\sqrt{n}} \gamma^i_{i_1 i_2} \overline{W}^{i_1}_{j-1} \overline{W}^{i_2}_j 
+ \frac{1}{n} (\delta^i_{i_1i_2 i_3} - 2\gamma^i_{ki_1} \gamma^k_{i_2i_3}) \overline{W}^{i_1}_{j-1} \overline{W}^{i_2}_j \overline{W}^{i_3}_{j+1} \\ 
& \quad + \frac{2}{\sqrt{n}} \gamma^i_{i_1i_2} \lambda^{i_2} u^{i_1}_j
+ \frac{1}{n} (3\delta^i_{i_1i_2 i_3} - 2\gamma^i_{ki_1} \gamma^k_{i_2i_3})
\lambda^{i_2} \lambda^{i_3} u^{i_1}_j 
+ \frac{1}{n} C^i_{i_1} u^{i_1}_j 
+ R^{(12)}_j . 
\end{aligned}
\end{equation}
Hence, we obtain an expansion
\begin{equation}
\label{eq:expansion}
\begin{aligned}
W^i_j - u^i_j 
= \frac{1}{\sqrt{n}} \gamma^i_{i_1 i_2} \overline{W}^{i_1}_{j-1} \overline{W}^{i_2}_j 
+ \frac{2}{\sqrt{n}} \gamma^i_{i_1 i_2} \lambda^{i_1} u^{i_2}_j
+ \frac{1}{n} \Xi^i_k u^k_j 
+ R^{(13)}_j ,
\end{aligned}
\end{equation}
where the matrix $\Xi = (\Xi^{i_1}_{i_2})_{i_1,i_2 = 1,\ldots, d}$ is computed as 
\begin{equation*}
\begin{aligned}
\Xi^{i_1}_{i_2}
&= C^{i_1}_{i_2} 
+ 3 (\delta^{i_1}_{i_2k_1k_3} - 2\gamma^{i_1}_{i_2k_1} \gamma^{k_1}_{k_2k_3}) \lambda^{k_2} \lambda^{k_3}\\
&= \sum_{k_2,k_3}  
\bigg( \frac{14}{5} \delta^{i_1}_{i_2 k_2 k_3} - \frac{18}{5} \gamma^{i_1}_{i_2k_1} \gamma^{k_1}_{k_2k_3} \bigg) 
\mathbf{1}_{k_2=k_3}
+ \sum_{k_2,k_3} \bigg( \frac{1}{5} \delta^{i_1}_{i_2 k_2 k_3} 
- \frac{2}{5} \gamma^{i_1}_{i_2k_1} \gamma^{k_1}_{k_2k_3} \bigg) 
\mathbf{1}_{i_2=k_2=k_3} \\
&\quad+ \sum_{k_2,k_3} (3\delta^{i_1}_{i_2k_1k_3} - 2\gamma^{i_1}_{i_2k_1} \gamma^{k_1}_{k_2k_3}) \lambda^{k_2} \lambda^{k_3}\\
&= \sum_{k_2,k_3} \delta^{i_1}_{i_2k_2k_3}
\bigg( 3\lambda^{k_2}\lambda^{k_3} 
+ \frac{14}{5} \mathbf{1}_{k_2=k_3} 
+ \frac{1}{5} \mathbf{1}_{k_2=k_3=i_2} \bigg) \\
&\quad- \sum_{k_2,k_3} \gamma^{i_1}_{i_2k_1} \gamma^{k_1}_{k_2k_3}
\bigg( 2\lambda^{k_2}\lambda^{k_3} 
+ \frac{18}{5} \mathbf{1}_{k_2=k_3} 
+ \frac{2}{5} \mathbf{1}_{k_2=k_3=i_2} \bigg). 
\end{aligned}
\end{equation*}
This is the matrix we defined in \eqref{eq:xi_matrix}. 
%We remark here that the matrix $\Xi$ is symmetric since one can exchange the order of differentiation.

\section*{Acknowledgments}
This work was supported by JSPS KAKENHI Grant Number JP22J12607 and the Research Institute for Mathematical Sciences, an International Joint Usage/Research Center located in Kyoto University.
Additionally, the author would like to thank Makiko Sasada and Hayate Suda for giving him fruitful comments and suggestions.

%%%%%%%%%%%%%%%%%%%%%%
%    References      %
%%%%%%%%%%%%%%%%%%%%%%
\bibliographystyle{abbrv}
\bibliography{reference} 

\end{document}